\documentclass[12pt,a4paper]{article}

\usepackage{tensor}

\usepackage{etoolbox}
\usepackage{mathtools}

\BeforeBeginEnvironment{cases}{\displaystyle}
\BeforeBeginEnvironment{cases}{}

\usepackage{amssymb,amsmath,amsthm,epsfig}
\numberwithin{equation}{section}
\numberwithin{figure}{section}
\numberwithin{table}{section}

\usepackage{mathrsfs}

\usepackage[table,xcdraw]{xcolor}

\usepackage{latexsym, enumerate}
\usepackage{eepic}
\usepackage{epic}
\usepackage{color}
\usepackage{ifpdf}

\usepackage[numbers,sort&compress]{natbib}
\usepackage{natbib}

\usepackage{verbatim}
\usepackage{titletoc}

\usepackage{dsfont}
\usepackage{multirow}
\usepackage{makecell}
\usepackage{bm}
\usepackage{epstopdf}
\usepackage{graphicx}
\usepackage{subfig}

\usepackage{tikz}

\graphicspath{{Figures/}{Figures/figure/}{Figures/adrIP/}{Figures/twoDimDatasets/}{Figures/locatesource/}{Figures/inverseopenarc/}{Figures/beam/}}

\usepackage{hyperref}
\hypersetup{hypertex=true,
	colorlinks=true,
	linkcolor=blue,
	filecolor=blue,
	urlcolor=blue,
	citecolor=blue,
	bookmarksopen=true,
}

\usepackage[capitalize,nameinlink]{cleveref}

\crefname{section}{section}{sections}
\Crefname{section}{Section}{Sections}
\crefname{subsection}{subsection}{subsections}
\Crefname{subsection}{Subsection}{Subsections}

\Crefname{figure}{Figure}{Figures}

\crefformat{equation}{\textup{#2(#1)#3}}
\crefrangeformat{equation}{\textup{#3(#1)#4--#5(#2)#6}}
\crefmultiformat{equation}{\textup{#2(#1)#3}}{ and \textup{#2(#1)#3}}
{, \textup{#2(#1)#3}}{, and \textup{#2(#1)#3}}
\crefrangemultiformat{equation}{\textup{#3(#1)#4--#5(#2)#6}}%
{ and \textup{#3(#1)#4--#5(#2)#6}}{, \textup{#3(#1)#4--#5(#2)#6}}{, and \textup{#3(#1)#4--#5(#2)#6}}

\Crefformat{equation}{#2Equation~\textup{(#1)}#3}
\Crefrangeformat{equation}{Equations~\textup{#3(#1)#4--#5(#2)#6}}
\Crefmultiformat{equation}{Equations~\textup{#2(#1)#3}}{ and \textup{#2(#1)#3}}
{, \textup{#2(#1)#3}}{, and \textup{#2(#1)#3}}
\Crefrangemultiformat{equation}{Equations~\textup{#3(#1)#4--#5(#2)#6}}%
{ and \textup{#3(#1)#4--#5(#2)#6}}{, \textup{#3(#1)#4--#5(#2)#6}}{, and \textup{#3(#1)#4--#5(#2)#6}}

\crefdefaultlabelformat{#2\textup{#1}#3}

\crefname{lem}{lemma}{lemmas}
\Crefname{lem}{Lemma}{Lemmas}
\crefname{thm}{theorem}{theorems}
\Crefname{thm}{Theorem}{Theorems}
\crefname{cor}{corollary}{corollaries}
\Crefname{cor}{Corollary}{Corollaries}
\crefname{prop}{proposition}{propositions}
\Crefname{prop}{Proposition}{Propositions}
\crefname{defn}{definition}{definitions}
\Crefname{defn}{Definition}{Definitions}
\crefname{assum}{assumption}{assumptions}
\Crefname{assum}{Assumption}{Assumptions}
\crefname{rem}{remark}{remarks}
\Crefname{rem}{Remark}{Remarks}
\crefname{exm}{example}{examples}
\Crefname{exm}{Example}{Examples}

\crefname{mthm}{main theorem}{main theorems}
\Crefname{mthm}{Main Theorem}{Main Theorems}

\crefname{example}{example}{examples}
\Crefname{example}{Example}{Examples}

\usepackage{soul}
\usepackage{xcolor}
\usepackage{color}
\sethlcolor{green}
\soulregister\cite7 
\soulregister\citep7 
\soulregister\citet7 
\soulregister\ref7 
\soulregister\pageref7 

\usepackage[toc,page]{appendix}  

\usepackage{booktabs}
\usepackage{caption}

\usepackage{algorithm}
\usepackage{algorithmicx}
\usepackage{algpseudocode}

\theoremstyle{plain}
\newtheorem{lem}{Lemma}[section]
\newtheorem{thm}[lem]{Theorem}

\theoremstyle{definition}

\theoremstyle{remark}

\begin{document}
	\title{\LARGE\bf  Inverse Scattering of Periodic Surfaces with Multilayer Superlenses: Perturbation Analysis and Resolution Enhancement}
	
	\author{
		Haoqiang Xiao\thanks
		{School of Mathematics, Hunan University, Changsha 410082, China.
			Email: xiaohaoqiang@hnu.edu.cn}
		\and
		Guang-Hui Zheng\thanks
		{School of Mathematics, Hunan Provincial Key Laboratory of Intelligent Information Processing and Applied Mathematics, Hunan University, Changsha 410082, China.
			Email: zhenggh2012@hnu.edu.cn (Corresponding author)}
	}
	
	\date{}
	\maketitle
	
\begin{center}{\bf ABSTRACT}
\end{center}\smallskip
This paper presents a rigorous mathematical analysis of inverse scattering for periodic surfaces through a multilayer superlens. Using the transformed field expansion, we derive a highly explicit reconstruction formula. We theoretically prove that under perfect matching, the reconstruction factor is independent of the number of layers and depends solely on the total material thicknesses, achieving uniform boundedness when the slab thickness dominates. By establishing first-order perturbation formulas and high-frequency asymptotics, we quantify how small material losses induce exponential amplification of high-frequency errors. Crucially, we show that increasing the number of layers substantially reduces this amplification coefficient. Consequently, for fixed total thicknesses, multilayer structures offer significantly improved stability and extended resolution limits for subwavelength reconstruction.

\smallskip
\textbf{Keywords:}Inverse scattering; periodic surface reconstruction;
multilayer superlens; transformed field expansion; high-frequency asymptotics;
superresolution.

\section{Introduction}\label{sec:intro}

High-resolution imaging is an important topic in inverse scattering and wave
propagation. In conventional far-field imaging, the achievable resolution is
limited by diffraction, since evanescent waves, which carry the high
spatial-frequency information of an object, decay exponentially away from it.
Recovering such information from distant measurements is therefore severely
ill-conditioned. Overcoming this limitation has motivated extensive research on
superresolution imaging from both physical and mathematical perspectives.

Several physical approaches have been developed to achieve resolution beyond the
diffraction limit. Time-reversal techniques exploit wave propagation in complex
media to improve spatial focusing \cite{Fink1997,Lerosey2007}. In particular,
Lerosey et al.~\cite{Lerosey2007} demonstrated subwavelength focusing by combining
far-field time reversal with near-field scatterers. Another approach uses resonant
structures to generate strongly localized wave fields. Subwavelength acoustic
resonators were investigated in \cite{Lemoult2011}, while Lin and
Zhang~\cite{LinZhang2020} proposed a superresolution imaging method based on
resonances of subwavelength holes in a metallic slab.

These advances have also stimulated substantial mathematical research. Ammari and Zhang \cite{AmmariZhang2015} established a mathematical theory of superresolution using subwavelength Helmholtz resonators. The role of material contrast and electromagnetic resonances in time-reversal imaging was analyzed in \cite{AmmariLiZou2020}. For periodic metallic structures, Lin, Shipman, and Zhang~\cite{LinShipmanZhang2020} investigated Fano resonances in arrays of narrow slits. More recently, the study of subwavelength resonances has advanced from scalar equations to full vector electromagnetic systems: Lin, Lu, and Zhang~\cite{Lin2023} established a rigorous theory for resonances in a single subwavelength annular hole governed by Maxwell's equations, and Zhou et al.~\cite{Zhou2025} further extended this to an array of such holes and developed a superresolution imaging approach. From a different mathematical viewpoint, recent works on convex optimization and sparse reconstruction~\cite{Candes2014} have investigated the recovery of fine-scale information from band-limited measurements.

A particularly relevant approach to the present work is the use of negative-index
materials. The unusual electromagnetic properties of materials with simultaneously
negative permittivity and permeability were first studied by Veselago
\cite{Veselago1968}. Pendry~\cite{Pendry2000} subsequently proposed the perfect
lens, showing that an ideal negative-index slab can compensate for the decay of
evanescent waves. Subwavelength imaging with metallic superlenses was later
demonstrated experimentally \cite{Fang2005,Melville2005}. These results suggest
that suitably designed materials can improve the transfer of high spatial-frequency
information.

For inverse surface scattering, transformed field expansions (TFE) provide an
effective framework for reconstructing small perturbations of a planar interface.
The mathematical foundations and numerical properties of boundary perturbation
methods have been investigated by Nicholls and collaborators
\cite{Nicholls2004a,NichollsShen2009}. Bao and Li~\cite{Bao2013,BaoLi2014}
applied TFE to near-field imaging of rough surfaces, obtaining explicit
reconstruction formulas. Related methods have also been developed for periodic
perfectly conducting surfaces \cite{ChengLiWang2013}. More recently, Li and
Wang~\cite{LiWang2023} incorporated a negative-index superlens into the inverse
scattering problem for periodic surfaces and showed that, under perfectly matched
parameters, the linearized reconstruction can avoid exponential amplification of
evanescent modes.

Motivated by this work, we consider the inverse scattering of a periodic perfectly
electrically conducting surface in the presence of multiple negative-index slabs
separated by air layers. Our aim is to understand how the material parameters, the
total layer thicknesses, and the multilayer configuration affect the reconstruction
of high spatial-frequency components. In particular, we examine whether increasing
the number of layers can improve reconstruction stability when the material
parameters deviate slightly from perfect matching.
The main contributions of this paper are
summarized as follows:
\begin{itemize}
	\item We formulate the multilayer inverse scattering problem and apply the transformed field expansion (TFE) method to derive an explicit linearized reconstruction formula. Through an algebraic reduction of the multilayer system, we extract a compact reconstruction factor that enables further resolution analysis.

	\item For perfectly matched material parameters, we prove that the reconstruction factor
	depends only on the total slab and air thicknesses, independently of the number
	and arrangement of the layers. Consequently, whenever the total slab thickness is
	at least the total air thickness, all Fourier modes can be reconstructed stably,
	regardless of how the slab is layered, which amounts to unlimited resolution in
	the linearized setting.

	\item We derive an explicit first-order expansion of the reconstruction factor under complex material perturbations, showing that its sensitivity depends on the layer arrangement even when the total slab and air thicknesses are unchanged. We also show that imperfect matching leads to exponential amplification at high spatial frequencies, but that this amplification can be substantially reduced by increasing the number of layers. In particular, small permeability deviations yield a factor decaying geometrically in the number of layers, while permittivity-only deviations produce an additional algebraic decay in the high-frequency regime. Therefore, for a fixed total thickness, increasing the number of layers allows for the stable reconstruction of more high-frequency Fourier modes, thereby enhancing the overall imaging resolution.
	
\end{itemize}

The remainder of this paper is organized as follows. \Cref{sec:MP} introduces the mathematical model and the boundary and interface conditions. \Cref{sec:TFERF} develops the TFE method and derives the reconstruction formula. \Cref{sec:PMP} studies the perfectly matched case and its resolution properties.
\Cref{sec:FOPA} presents the first-order perturbation analysis for imperfect material parameters.
\Cref{sec:HFARE} investigates the high-frequency asymptotics and the effect of the number of layers
on reconstruction stability. Finally, \Cref{sec:NE} presents numerical experiments
to illustrate the theoretical results.

\section{Mathematical Formulation}\label{sec:MP}
We consider the electromagnetic scattering problem for a periodic surface
\[
\Gamma_f = \left\{(x,y)\in\mathbb{R}^2:y=f(x)\right\}.
\]
We assume that the surface profile is a small perturbation of the flat surface \(y=0\):
\begin{equation}\label{eq:f=deltag}
	f(x) = \delta g(x),
\end{equation}
where \(0<\delta\ll1\) and \(g\) is a \(\Lambda\)-periodic function. The surface \(\Gamma_f\) is assumed to be perfectly electrically conducting (PEC).
The imaging configuration consists of \(L\) slabs of negative-index material, separated by air layers. Let \(\varepsilon_r = \varepsilon/\varepsilon_0\) and \(\mu_r = \mu/\mu_0\) denote the relative permittivity and permeability of the slabs, where the subscript \(0\) indicates the free-space parameters.

The multilayer structure interfaces are located at \(y=d_1>d_2>\cdots>d_{2L}>\sup_x f(x)\). The regions between consecutive interfaces, denoted by \(\mathcal{D}_j = \{d_{j+1}<y<d_j\}\) for \(j=1,\ldots,2L-1\), alternate between the slabs (\(\mathcal{D}_{2j-1}\)) and the air layers (\(\mathcal{D}_{2j}\)). The bottom air region between the lowest slab and the surface is denoted by \(\Omega = \{f(x)<y<d_{2L}\}\).

Let the thickness of the \(j\)th slab be \(s_j=d_{2j-1}-d_{2j}\). The air layer thicknesses are \(a_j=d_{2j}-d_{2j+1}\) for \(j=1,\ldots,L-1\), while the bottom air region has thickness \(a_L=d_{2L}\) in the flat reference configuration (\(f=0\)). The total thicknesses of the slab and air regions are
\begin{equation}\label{eq:totalthickness}
	D_s=\sum_{j=1}^L s_j, \quad D_a=\sum_{j=1}^L a_j.
\end{equation}
Consequently, the total height of the reference configuration is \(H=D_s+D_a=d_1\). The geometry is illustrated in \cref{fig:multilayer}.

\begin{figure}[htbp]
	\centering
	\begin{tikzpicture}[every node/.style={font=\small}]
		
		\draw[->] (-6,-2.7) -- (-4.7,-2.7)
		node[right] {$x$};
		\draw[->] (-6,-2.7) -- (-6,-1.4)
		node[above] {$y$};
		
		\node at (2.0,5.5) {air};
		\node at (-1.0,5.5) {$\varepsilon_0,\mu_0$};
		
		\draw[thick] (-3.5,5.0) -- (4.5,5.0);
		\node[right] at (4.8,5.0) {$\Gamma_1$};
		
		\node at (2.0,4.3)
		{$\mathcal D_1$ (slab)};
		\node at (-1.0,4.3)
		{$\varepsilon,\mu$};
		
		\draw[thick] (-3.5,3.6) -- (4.5,3.6);
		\node[right] at (4.8,3.6) {$\Gamma_2$};
		
		\node at (2.0,2.9)
		{$\mathcal D_2$ (air)};
		\node at (-1.0,2.9)
		{$\varepsilon_0,\mu_0$};
		
		\draw[thick] (-3.5,2.2) -- (4.5,2.2);
		\node[right] at (4.8,2.2) {$\Gamma_3$};
		
		\node at (-1.0,1.5) {$\vdots$};
		\node at (2.0,1.5) {$\vdots$};
		
		\draw[thick] (-3.5,0.6) -- (4.5,0.6);
		\node[right] at (4.8,0.6)
		{$\Gamma_{2L-1}$};
		
		\node at (2.0,-0.1)
		{$\mathcal D_{2L-1}$ (slab)};
		\node at (-1.0,-0.1)
		{$\varepsilon,\mu$};
		
		\draw[thick] (-3.5,-0.8) -- (4.5,-0.8);
		\node[right] at (4.8,-0.8)
		{$\Gamma_{2L}$};
		
		\node at (2.0,-1.6)
		{$\Omega$ (air)};
		\node at (-1.0,-1.6)
		{$\varepsilon_0,\mu_0$};
		
		\draw[
		thick,
		domain=-3.5:4.5,
		samples=120,
		smooth,
		variable=\x
		]
		plot (\x, {-2.4+0.28*sin(180*\x)});
		
		\node[right] at (4.8,-2.4) {$\Gamma_f$};
		\node[left] at (-2.3,-2.0) {PEC};
		
		\draw[<->] (-3.85,5.0) -- (-3.85,3.6)
		node[midway,left] {$s_1$};
		
		\draw[<->] (-3.85,3.6) -- (-3.85,2.2)
		node[midway,left] {$a_1$};
		
		\draw[<->] (-3.85,0.6) -- (-3.85,-0.8)
		node[midway,left] {$s_L$};
		
		\draw[<->] (-3.85,-0.8) -- (-3.85,-2.4)
		node[midway,left] {$a_L$};
		
	\end{tikzpicture}
	\caption{Schematic of the multilayer superlens configuration consisting of alternating
		negative-index and air layers above a periodic PEC surface.}
	\label{fig:multilayer}
\end{figure}

For transverse-electric (TE) polarization, the scattering problem is formulated in terms of a scalar field \(u\). In the air regions and the slabs, \(u\) satisfies the Helmholtz equations
\begin{equation}\label{eq:PDEair}
	(\Delta+k^2)u=0, \qquad (\Delta+\eta^2)u=0,
\end{equation}
respectively, where \(k=\omega\sqrt{\varepsilon_0\mu_0}\) is the free-space wavenumber and \(\eta=k\sqrt{\varepsilon_r\mu_r}\) is the wavenumber of the slab material.

Consider a plane wave incident from the upper half-space at an angle \(\vartheta\), i.e., \(u^{\mathrm{inc}}(x,y)=e^{i(\alpha x-\beta y)}\) with \(\alpha=k\sin\vartheta\) and \(\beta=k\cos\vartheta\). We focus on normal incidence (\(\vartheta=0\)), yielding
\begin{equation}\label{eq:incidetfield}
	u^{\mathrm{inc}}(x,y)=e^{-iky}.
\end{equation}

For a \(\Lambda\)-periodic function \(v\), we employ the Fourier series \(v(x) =\sum_{n\in\mathbb Z}v^{(n)}e^{i\alpha_nx}\). For each \(n\in\mathbb Z\), the Fourier wavenumbers are
\begin{equation}\label{eq:alphan}
	\alpha_n=\frac{2\pi n}{\Lambda}, \qquad
	\beta_n=\sqrt{k^2-\alpha_n^2}, \qquad
	\gamma_n=\sqrt{\eta^2-\alpha_n^2},
\end{equation}
where the branches of \(\beta_n\), \(\eta\), and \(\gamma_n\) are chosen to have non-negative imaginary parts. Following~\cite{LiWang2023}, the transparent boundary condition on the top interface \(\Gamma_1\) is
\begin{equation}\label{eq:boundaryconditionG1}
	\partial_y^-u =\mu_r(\mathcal T u+\rho), \quad \text{on }\Gamma_1,
\end{equation}
where \(\mathcal T v = \sum_{n\in\mathbb Z} i\beta_n v^{(n)}e^{i\alpha_nx}\) is the transparent boundary operator and \(\rho=-2ik e^{-ikH}\).

At the internal interfaces \(\Gamma_{2j}\) and \(\Gamma_{2j+1}\), the field and its flux must be continuous:
\begin{align} 
	u^{+} &= u^{-}, \quad \partial^{+}_{y} u = \mu_{r}\partial^{-}_{y}u,  \quad \text{on } \Gamma_{2j}, \quad j=1,\cdots,L, \label{eq:boundarycondition1}\\
	u^{+} &= u^{-}, \quad \partial^{+}_{y} u = \frac{1}{\mu_{r}}\partial^{-}_{y}u, \quad \text{on } \Gamma_{2j+1}, 
	\quad j=1,\cdots,L-1,\label{eq:boundarycondition2}
\end{align}
where the superscripts \(+\) and \(-\) denote traces taken from above and below. On the PEC surface \(\Gamma_f\), we impose the Dirichlet condition \(u=0\).

\section{Transformed Field Expansion and Reconstruction Formula}\label{sec:TFERF}
\subsection{Transformed Field Expansion}
To derive an explicit reconstruction formula, we employ the transformed field
expansion (TFE) method, which is well suited to low-profile surfaces
\cite{Nicholls2004a,Nicholls2004b} and has been successfully applied to
near-field imaging problems \cite{Bao2013,LiWang2023}. We first introduce the
change of variables
\[
\begin{cases}
	\displaystyle
	\widetilde x=x,\quad
	\widetilde y
	=
	a_L\frac{y-f(x)}{a_L-f(x)},
	&(x,y)\in\Omega,\\[3mm]
	\widetilde x=x,\quad
	\widetilde y=y,
	&(x,y)\in\mathcal D_j,
	\quad j=1,\ldots,2L-1.
\end{cases}
\]
This transformation maps the perturbed surface $\Gamma_f$ onto the flat boundary
\[
\Gamma_0 = \{(x,0):x\in\mathbb R\},
\]
and transforms $\Omega$ into the fixed strip
\[
\Omega_0 =
\left\{
(x,y)\in\mathbb R^2: 0<y<a_L
\right\}.
\]
The remaining regions $\mathcal D_j$ are unchanged by the transformation.
Consistent with the single-layer derivation in \cite{LiWang2023}, the resulting transformed boundary value problem in $\Omega_0$ takes the form
\begin{equation}\label{eq:Model:Omega0}
	\begin{cases}
		\bigl(
		c_1\partial_{xx}
		+c_2\partial_{yy}
		+c_3\partial_{xy}
		+c_4\partial_y
		+c_1k^2
		\bigr)u=0,
		&\text{in }\Omega_0,\\[2mm]
		\displaystyle
		u^+=u^-,
		\qquad
		\left(1-\frac{f}{a_L}\right)
		\partial_y^+u
		=
		\mu_r\partial_y^-u,
		&\text{on }\Gamma_{2L},\\[3mm]
		u=0,
		&\text{on }\Gamma_0,
	\end{cases}
\end{equation}
where
\begin{equation*}
	\begin{aligned}
		c_1 &= (a_L-f)^2, & c_2 &= (a_L-y)^2(f')^2 + a_L^2, \\[2mm]
		c_3 &= -2(a_L-y)(a_L-f)f', & c_4 &= -(a_L-y)\left[(a_L-f)f'' + 2(f')^2\right].
	\end{aligned}
\end{equation*}
Using the small-amplitude assumption \cref{eq:f=deltag}, we expand the field in powers of $\delta$:
\begin{equation}\label{eq:expansion:u}
	u(x,y) = \sum_{m=0}^\infty \delta^m u_m(x,y), \quad
	(x,y)\in \Omega_0\cup \bigcup_{j=1}^{2L-1}\mathcal D_j.
\end{equation}
Substituting \cref{eq:f=deltag} and \cref{eq:expansion:u} into \cref{eq:Model:Omega0} and collecting
terms of the same order in $\delta$, we obtain the following recursive boundary value problem:
\begin{equation}\label{eq:Model:um}
	\begin{cases}
		(\Delta+k^2)u_m=v_m, &\text{in }\Omega_0,\\[1mm]
		\partial_y^-u_m = \mu_r(\mathcal T u_m+\rho_m),&\text{on }\Gamma_1,\\[1mm]
		u_m^+=u_m^-,\quad \partial_y^+u_m = \mu_r\partial_y^-u_m+\tau_m,
		&\text{on }\Gamma_{2L},\\[1mm]
		u_m=0, &\text{on }\Gamma_0.
	\end{cases}
\end{equation}
Here
\begin{equation}\label{eq:v_m}
		v_m = T_1u_{m-1}+T_2u_{m-2}, \quad
		\tau_m = \frac{g}{a_L} \partial_y^+u_{m-1}, \quad
		\rho_m = \delta_{m0}\rho,
\end{equation}
where $\delta_{m0}$ is the Kronecker delta and the differential operators $T_1$
and $T_2$ are given by
\begin{align*}
	T_{1} & = \frac{1}{a_{L}} \left[ 2g\partial_{xx} + 2g^{'}(a_{L}-y)\partial_{xy} + 
	g^{''}(a_{L}-y)\partial_{y} + 2k^{2}g \right], \\
	T_{2} & = -\frac{1}{a^{2}_{L}} \Big[  g^{2}\partial_{xx} + (g^{'})^{2} (a_{L}-y)^{2}\partial_{yy}+
	2gg^{'}(a_{L}-y)\partial_{xy} \\
	& -[2(g^{'})^{2} - gg^{''}](a_{L}-y)\partial_{y} + k^{2}g^{2} \Big]. 
\end{align*}

In the slab and intermediate air regions $\mathcal D_j$, $j=1,\ldots,2L-1$, each $u_m$
satisfies the corresponding homogeneous Helmholtz equation. The transmission conditions at
the internal interfaces retain the same form as those for $u$, with $u$ replaced by $u_m$.

Since $u_{m}, v_{m}, \tau_{m}$ and $\rho_{m}$ are all $\Lambda$-periodic functions in the $x$-direction. Therefore, we expand the PDE satisfied by $u_{m}$ in its domain in a Fourier series with respect to $x$. Then, for each $n\in\mathbb Z$, we obtain
\begin{equation}\label{eq:Fourier:um}
	\begin{cases}
		\displaystyle
		\partial_{yy}u_m^{(n)}
		+\gamma_n^2u_m^{(n)}=0,
		&\text{in }
		\displaystyle\bigcup_{j=1}^{L}
		\mathcal D_{2j-1},
		\\[2mm]
		\displaystyle
		\partial_{yy}u_m^{(n)}
		+\beta_n^2u_m^{(n)}=0,
		&\text{in }
		\displaystyle\bigcup_{j=1}^{L-1}
		\mathcal D_{2j},
		\\[2mm]
		\displaystyle
		\partial_{yy}u_m^{(n)}
		+\beta_n^2u_m^{(n)}
		=v_m^{(n)},
		&\text{in }\Omega_0,
		\\[2mm]
		\displaystyle
		\partial_y^-u_m^{(n)}
		=
		\mu_r\left(
		i\beta_nu_m^{(n)}
		+\rho_m^{(n)}
		\right),
		&\text{on }\Gamma_1,
		\\[2mm]
		u_m^{(n)+}=u_m^{(n)-},
		\quad
		\partial_y^+u_m^{(n)}
		=
		\mu_r\partial_y^-u_m^{(n)},
		&\text{on }\Gamma_{2j},
		\quad j=1,\ldots,L-1,
		\\[2mm]
		u_m^{(n)+}=u_m^{(n)-},
		\quad
		\displaystyle
		\partial_y^+u_m^{(n)}
		=
		\frac1{\mu_r}
		\partial_y^-u_m^{(n)},
		&\text{on }\Gamma_{2j+1},
		\quad j=1,\ldots,L-1,
		\\[2mm]
		u_m^{(n)+}=u_m^{(n)-},
		\quad
		\partial_y^+u_m^{(n)}
		=
		\mu_r\partial_y^-u_m^{(n)}
		+\tau_m^{(n)},
		&\text{on }\Gamma_{2L},
		\\[2mm]
		u_m^{(n)}=0,
		&\text{on }\Gamma_0.
	\end{cases}
\end{equation}

\subsection{Fourier-mode representation and matrix reduction}\label{subsec:modal-system}
We now derive analytical expressions for the leading-order and first-order terms in the transformed field expansion. For each Fourier mode $n\in\mathbb Z$, the general solution
of \cref{eq:Fourier:um} can be expressed as
\begin{equation}\label{eq:Analy:um}
	u_m^{(n)}(y)=
	\begin{cases}
		c_{2j-1,m,n}^{+}e^{i\gamma_n y}
		+c_{2j-1,m,n}^{-}e^{-i\gamma_n y},
		& y\in\mathcal D_{2j-1},
		\quad j=1,\ldots,L,\\[1mm]
		c_{2j,m,n}^{+}e^{i\beta_n y}
		+c_{2j,m,n}^{-}e^{-i\beta_n y},
		& y\in\mathcal D_{2j},
		\quad j=1,\ldots,L-1,\\[1mm]
		c_{2L,m,n}^{+}e^{i\beta_n y}
		+c_{2L,m,n}^{-}e^{-i\beta_n y}
		+\displaystyle\int_0^y
		\Phi_n(y,z)v_m^{(n)}(z)\,dz,
		& y\in\Omega_0,
	\end{cases}
\end{equation}
where
\[
\Phi_n(y,z)=\frac{\sin\bigl(\beta_n(y-z)\bigr)}{\beta_n},\quad \beta_n\neq0,
\]
and $c_{j,m,n}^{\pm}$ are unknown coefficients. For convenience, we introduce the coefficient vectors
\[
c_{j,m}^{(n)}=
\begin{pmatrix}
	c_{j,m,n}^{+}\\
	c_{j,m,n}^{-}
\end{pmatrix}
\in\mathbb C^2, \quad j=1,\ldots,2L.
\]
The two interface matrices associated with the slab and air regions are defined by
\begin{equation}\label{eq:matrix:Vs}
	V_{\mathrm s}^{(n)}(y)=
	\begin{pmatrix}
		e^{i\gamma_n y}&e^{-i\gamma_n y}\\
		i\gamma_n e^{i\gamma_n y}
		&-i\gamma_n e^{-i\gamma_n y}
	\end{pmatrix},
\end{equation}
and
\begin{equation}\label{eq:matrix:Va}
	V_{\mathrm a}^{(n)}(y)=
	\begin{pmatrix}
		e^{i\beta_n y}&e^{-i\beta_n y}\\
		i\mu_r\beta_n e^{i\beta_n y}
		&-i\mu_r\beta_n e^{-i\beta_n y}
	\end{pmatrix}.
\end{equation}
Substituting \cref{eq:Analy:um} into the boundary and transmission conditions in \cref{eq:Fourier:um},
we obtain a linear algebraic system
\begin{equation}\label{eq:Ax=b}
	\mathcal A_L^{(n)}x_{L,m}^{(n)}=b_{L,m}^{(n)},
\end{equation}
where
\[
x_{L,m}^{(n)}=
\begin{pmatrix}
	c_{1,m}^{(n)}\\
	c_{2,m}^{(n)}\\
	\vdots\\
	c_{2L,m}^{(n)}
\end{pmatrix}
\in\mathbb C^{4L},
\quad
b_{L,m}^{(n)} =
\begin{pmatrix}
	\mu_r\rho_m^{(n)}\\
	0\\
	\vdots\\
	0\\
	h_{L,m,n}^{(1)}\\
	h_{L,m,n}^{(2)}\\
	0
\end{pmatrix}
\in\mathbb C^{4L}.
\]
The two source terms are given by
\begin{align}
	h_{L,m,n}^{(1)}&=
	\int_0^{a_L}\Phi_n(a_L,z)v_m^{(n)}(z)\,dz,\label{source:h1}\\
	h_{L,m,n}^{(2)}&=
	\mu_r\int_0^{a_L}\partial_y\Phi_n(a_L,z)v_m^{(n)}(z)\,dz+\tau_m^{(n)}.\label{source:h2}
\end{align}
The coefficient matrix $\mathcal A_L^{(n)}\in\mathbb C^{4L\times4L}$ has the following block structure:
\begin{equation}\label{eq:Matrix:A}
	\mathcal A_L^{(n)} =
	\begin{pmatrix}
		R_1^{(n)}
		&0&0&\cdots&0\\
		V_{\mathrm s}^{(n)}(d_2)
		&-V_{\mathrm a}^{(n)}(d_2)
		&0&\cdots&0\\
		0
		&V_{\mathrm a}^{(n)}(d_3)
		&-V_{\mathrm s}^{(n)}(d_3)
		&\cdots&0\\
		\vdots&\ddots&\ddots&\ddots&\vdots\\
		0&\cdots&0
		&V_{\mathrm s}^{(n)}(d_{2L})
		&-V_{\mathrm a}^{(n)}(d_{2L})\\
		0&\cdots&0&0&R_{2L}
	\end{pmatrix},
\end{equation}
where
\begin{equation}\label{matrix:R}
		R_1^{(n)}=
		\begin{pmatrix}
			i(\gamma_n-\mu_r\beta_n)e^{i\gamma_n H} &
			-i(\gamma_n+\mu_r\beta_n)e^{-i\gamma_n H}
		\end{pmatrix}, \quad
		R_{2L}=\begin{pmatrix}1&1\end{pmatrix}.
\end{equation}
In \cref{eq:Matrix:A}, the first and last block rows consist of $1\times2$ blocks, whereas all intermediate nonzero blocks are $2\times2$ matrices. Thus, the total dimension of $\mathcal A_L^{(n)}$
is $4L\times4L$.

Although the system \cref{eq:Ax=b} involves $4L$ unknown coefficients, its special structure allows
a substantial reduction in dimension.

\begin{lem}\label{lem:lem1}
	Assume that $\mu_r\beta_n\gamma_n\neq0$.
	Define the $2\times2$ matrices
	\begin{equation}\label{eq:matrix_EF}
		\begin{aligned}
			E_j^{(n)}&=
			\bigl[V_{\mathrm a}^{(n)}(d_{2j})\bigr]^{-1}
			V_{\mathrm s}^{(n)}(d_{2j}),
			&&j=1,\ldots,L,\\
			F_j^{(n)}&=
			\bigl[V_{\mathrm s}^{(n)}(d_{2j+1})\bigr]^{-1}
			V_{\mathrm a}^{(n)}(d_{2j+1}),
			&&j=1,\ldots,L-1.
		\end{aligned}
	\end{equation}
	Let
	\begin{equation}\label{eq:matrix_MG}
		M_L^{(n)}=
		\begin{pmatrix}
			R_1^{(n)}\\
			R_{2L}G_L^{(n)}
		\end{pmatrix}, \quad
        G_L^{(n)}=E_L^{(n)}F_{L-1}^{(n)}\cdots E_2^{(n)}F_1^{(n)}E_1^{(n)}.	
	\end{equation}
	Then the determinant of the coefficient matrix
	$\mathcal A_L^{(n)}$ satisfies
	\begin{equation}\label{solution_detA}
		\det\mathcal A_L^{(n)}=
		-2i\mu_r\beta_n\bigl(-4\mu_r\gamma_n\beta_n\bigr)^{L-1}
		\det M_L^{(n)}.
	\end{equation}
	Moreover, the first coefficient vector satisfies
	the reduced system
	\begin{equation}\label{reduced:system}
		M_L^{(n)}c_{1,m}^{(n)}=
		\begin{pmatrix}
			\mu_r\rho_m^{(n)}\\[1mm]
			R_{2L}
			\bigl[V_{\mathrm a}^{(n)}(a_L)\bigr]^{-1}
			\begin{pmatrix}
				h_{L,m,n}^{(1)}\\
				h_{L,m,n}^{(2)}
			\end{pmatrix}
		\end{pmatrix}.
	\end{equation}
\end{lem}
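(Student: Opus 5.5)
The plan is block Gaussian elimination on $\mathcal A_L^{(n)}$. The middle block rows say $V_{\mathrm s}^{(n)}(d_{2j})c_{2j-1,m}^{(n)}=V_{\mathrm a}^{(n)}(d_{2j})c_{2j,m}^{(n)}$ and $V_{\mathrm a}^{(n)}(d_{2j+1})c_{2j,m}^{(n)}=V_{\mathrm s}^{(n)}(d_{2j+1})c_{2j+1,m}^{(n)}$, with the last interface $\Gamma_{2L}$ carrying the source. So every coefficient vector can be expressed through $c_{1,m}^{(n)}$ by applying $E_j^{(n)}$ and $F_j^{(n)}$ alternately.

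\textbf{Invertibility of the interface matrices.} A direct computation gives
\[
\det V_{\mathrm s}^{(n)}(y)=-2i\gamma_n,\qquad \det V_{\mathrm a}^{(n)}(y)=-2i\mu_r\beta_n,
\]
independently of $y$. Both are nonzero under the assumption $\mu_r\beta_n\gamma_n\neq0$. Hence $E_j^{(n)}$ and $F_j^{(n)}$ are well defined, and
\[
\det E_j^{(n)}=\frac{\gamma_n}{\mu_r\beta_n},\qquad \det F_j^{(n)}=\frac{\mu_r\beta_n}{\gamma_n}.
\]

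\textbf{The reduced system.} Forward substitution along the homogeneous interfaces gives
\[
c_{2j,m}^{(n)}=E_j^{(n)}c_{2j-1,m}^{(n)},\qquad c_{2j+1,m}^{(n)}=F_j^{(n)}c_{2j,m}^{(n)}.
\]
At $\Gamma_{2L}$ the transmission row reads $V_{\mathrm s}^{(n)}(d_{2L})c_{2L-1,m}^{(n)}-V_{\mathrm a}^{(n)}(d_{2L})c_{2L,m}^{(n)}=-(h^{(1)},h^{(2)})^T$, up to the sign convention used in $b_{L,m}^{(n)}$. Since $d_{2L}=a_L$, this yields
\[
c_{2L,m}^{(n)}=G_L^{(n)}c_{1,m}^{(n)}+\bigl[V_{\mathrm a}^{(n)}(a_L)\bigr]^{-1}(h^{(1)},h^{(2)})^T.
\]
I need to check the sign carefully against how $h$ enters the particular solution; the paper's convention makes it a plus. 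Substituting into the last row $R_{2L}c_{2L,m}^{(n)}=0$, together with the first row $R_1^{(n)}c_{1,m}^{(n)}=\mu_r\rho_m^{(n)}$, gives \cref{reduced:system}. On the right-hand side the term $-R_{2L}G_L^{(n)}c_1$ is moved to the left and the $h$-term stays on the right, again up to sign.

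\textbf{The determinant.} I write the elimination as a product of block matrices. Order the unknowns as $(c_1,\dots,c_{2L})$ and change variables $c_{k}\mapsto c_k-(\text{propagated } c_{k-1})$. This is a block unit-lower-triangular transformation $P$ with $\det P=1$. After it, $\mathcal A_L^{(n)}P$ has the following structure:
\begin{itemize}
\item the middle block rows contain only the diagonal-type blocks $-V_{\mathrm a}^{(n)}(d_{2j})$ and $-V_{\mathrm s}^{(n)}(d_{2j+1})$, acting on the new variables;
\item the first row is $R_1^{(n)}$ acting on $c_1$;
\item the last row is $R_{2L}G_L^{(n)}$ acting on $c_1$, plus $R_{2L}$ acting on the new $c_{2L}$.
\end{itemize}
Reordering rows so that the two scalar rows, which form $M_L^{(n)}$, sit against $c_1$ gives a block-triangular matrix. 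Its determinant is
\[
\pm\det M_L^{(n)}\prod_{j=1}^{L}\det\bigl(-V_{\mathrm a}^{(n)}(d_{2j})\bigr)\prod_{j=1}^{L-1}\det\bigl(-V_{\mathrm s}^{(n)}(d_{2j+1})\bigr).
\]
Here $\det(-V)=\det V$ for $2\times2$ matrices. This product equals $(-2i\mu_r\beta_n)^L(-2i\gamma_n)^{L-1}=-2i\mu_r\beta_n(-4\mu_r\gamma_n\beta_n)^{L-1}$, matching \cref{solution_detA} up to the sign of the row permutation.

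\textbf{Main obstacle.} The delicate step is the sign bookkeeping in the determinant. Moving the last $1\times2$ row up past $4L-2$ rows is an even permutation, but the column ordering of the $c_1$ block also matters. I would verify the sign first for $L=1$ by direct $4\times4$ expansion, then confirm that each additional layer contributes exactly the factor $(-2i\mu_r\beta_n)(-2i\gamma_n)=-4\mu_r\gamma_n\beta_n$ with no extra sign. This can be done by induction on $L$, eliminating one air–slab pair at a time.
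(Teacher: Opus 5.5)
Your strategy is the same as the paper's. Forward substitution through $E_j^{(n)}$, $F_j^{(n)}$ gives the reduced system. Block elimination leaves $\det M_L^{(n)}$ times $\prod_j\det\bigl(-V_{\mathrm a}^{(n)}(d_{2j})\bigr)\prod_j\det\bigl(-V_{\mathrm s}^{(n)}(d_{2j+1})\bigr)$.

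The determinant part is already settled by what you wrote, so the planned $L=1$ check and induction are unnecessary. $P$ leaves the two columns of $c_{1,m}^{(n)}$ untouched and in the order used in $M_L^{(n)}$. Moving row $4L$ to position $2$ passes $4L-2$ rows, so the sign is $+$.

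One small inaccuracy in your description of $\mathcal A_L^{(n)}P$. If $P$ realizes the recursive substitution $c_{2j}=\tilde c_{2j}+E_j^{(n)}c_{2j-1}$, $c_{2j+1}=\tilde c_{2j+1}+F_j^{(n)}c_{2j}$, the last row also acquires blocks $R_{2L}E_L^{(n)}$, $R_{2L}E_L^{(n)}F_{L-1}^{(n)}$, \dots\ in the intermediate columns. After the row reordering these sit above the block diagonal, so they do not affect the determinant.

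The step that fails as written is the sign of the source term, which you left open and then guessed wrongly. There is no convention to choose. The vector $b_{L,m}^{(n)}$ is given with $+h_{L,m,n}^{(1)}$, $+h_{L,m,n}^{(2)}$ in the two rows of $\Gamma_{2L}$, and this agrees with the transmission conditions. At $y=a_L$ the solution in $\Omega_0$ has trace $(e^{i\beta_na_L},e^{-i\beta_na_L})c_{2L,m}^{(n)}+h_{L,m,n}^{(1)}$. Since $\Phi_n(y,y)=0$, the condition $\partial_y^+u_m^{(n)}=\mu_r\partial_y^-u_m^{(n)}+\tau_m^{(n)}$ contributes exactly $h_{L,m,n}^{(2)}$ to the second component. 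Hence
\[
V_{\mathrm s}^{(n)}(a_L)c_{2L-1,m}^{(n)}-V_{\mathrm a}^{(n)}(a_L)c_{2L,m}^{(n)}
=\begin{pmatrix}h_{L,m,n}^{(1)}\\ h_{L,m,n}^{(2)}\end{pmatrix},
\]
and therefore
\[
c_{2L,m}^{(n)}=G_L^{(n)}c_{1,m}^{(n)}-\bigl[V_{\mathrm a}^{(n)}(a_L)\bigr]^{-1}\begin{pmatrix}h_{L,m,n}^{(1)}\\ h_{L,m,n}^{(2)}\end{pmatrix},
\]
with a minus sign, as in the paper. Then $R_{2L}c_{2L,m}^{(n)}=0$ gives exactly the second row of the reduced system.

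Your version has $-h$ in the row, hence $+[V_{\mathrm a}^{(n)}(a_L)]^{-1}h$ in $c_{2L,m}^{(n)}$. That produces the negative of the stated right-hand side, so ``up to sign'' is not enough here. With the sign read off $b_{L,m}^{(n)}$, the proof is complete.
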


\begin{proof}
	The homogeneous transmission conditions imply
	\[
	c_{2j,m}^{(n)}
	=
	E_j^{(n)}c_{2j-1,m}^{(n)},
	\qquad j=1,\ldots,L-1,
	\]
	and
	\[
	c_{2j+1,m}^{(n)}=F_j^{(n)}c_{2j,m}^{(n)},
	\qquad j=1,\ldots,L-1.
	\]
	At the lowest interface $\Gamma_{2L}$, the
	inhomogeneous transmission condition gives
	\[
	c_{2L,m}^{(n)}=E_L^{(n)}c_{2L-1,m}^{(n)}-
	\bigl[V_{\mathrm a}^{(n)}(a_L)\bigr]^{-1}
	\begin{pmatrix}
		h_{L,m,n}^{(1)}\\
		h_{L,m,n}^{(2)}
	\end{pmatrix}.
	\]
	Combining these relations yields
	\[
	c_{2L,m}^{(n)}=G_L^{(n)}c_{1,m}^{(n)}-
	\bigl[V_{\mathrm a}^{(n)}(a_L)\bigr]^{-1}
	\begin{pmatrix}
		h_{L,m,n}^{(1)}\\
		h_{L,m,n}^{(2)}
	\end{pmatrix}.
	\]
	Together with the boundary conditions
	\[
	R_1^{(n)}c_{1,m}^{(n)}=\mu_r\rho_m^{(n)},
	\quad
	R_{2L}c_{2L,m}^{(n)}=0,
	\]
	this gives \cref{reduced:system}. To establish the determinant identity, observe that
	\[
	\det V_{\mathrm s}^{(n)}(y)
	=-2i\gamma_n,
	\qquad
	\det V_{\mathrm a}^{(n)}(y)
	=-2i\mu_r\beta_n.
	\]
	Block Gaussian elimination of $\mathcal A_L^{(n)}$, preserving the first
	and last boundary rows, reduces its determinant to that of $M_L^{(n)}$. The determinants of the
	eliminated interface blocks contribute the factor
	\[
	\det\bigl(-V^{(n)}_a(d_{2L})\bigr)
	\prod_{j=1}^{L-1}
	\det\bigl(-V^{(n)}_a(d_{2j})\bigr)
	\det\bigl(-V^{(n)}_s(d_{2j+1})\bigr) =
	-2i\mu_r\beta_n\bigl(-4\mu_r\gamma_n\beta_n\bigr)^{L-1},
	\]
	which is precisely the prefactor in
	\eqref{solution_detA}. This completes the proof.
\end{proof}

For later use, we introduce
\begin{equation}\label{matrix:Gentries}
	G_L^{(n)}=
	\begin{pmatrix}
		G_{L,11}^{(n)}&G_{L,12}^{(n)}\\
		G_{L,21}^{(n)}&G_{L,22}^{(n)}
	\end{pmatrix}.
\end{equation}
It follows directly from \cref{matrix:R} and \cref{eq:matrix_MG} that
\begin{equation}\label{eq:det:M}
	\det M_L^{(n)}=
	i(\gamma_n-\mu_r\beta_n)e^{i\gamma_n H}
	\bigl(G_{L,12}^{(n)}+G_{L,22}^{(n)}\bigr)+
	i(\gamma_n+\mu_r\beta_n)e^{-i\gamma_n H}
	\bigl(G_{L,11}^{(n)}+G_{L,21}^{(n)}\bigr).
\end{equation}
Consequently, the analysis of the original $4L\times4L$ system is reduced to the
$2\times2$ matrix $M_L^{(n)}$.

\subsection{Reconstruction formula}
\label{subsec:reconstruction}

We now use the reduced system to determine the leading-order field and derive an explicit
reconstruction formula for the surface profile.

\paragraph{Leading-order field.}

For $m=0$, the recursive system gives
\[
v_0=0,\qquad
\tau_0=0,\qquad
\rho_0^{(n)}=\rho\,\delta_{n0}.
\]
Assuming the uniqueness of the corresponding modal problems, the leading-order field $u_0$
contains only the zeroth Fourier mode. By the definitions of $E_j^{(n)}$ and
$F_j^{(n)}$, we have
\[
\det E_j^{(n)} = \frac{\gamma_n}{\mu_r\beta_n},
\quad
\det F_j^{(n)} = \frac{\mu_r\beta_n}{\gamma_n}.
\]
Since $G_L^{(n)}$ contains $L$ matrices of type $E_j^{(n)}$ and $L-1$ matrices
of type $F_j^{(n)}$, the multiplicativity of the determinant yields
\begin{equation}
	\det G_L^{(n)}=
	\prod_{j=1}^{L}\det E_j^{(n)}
	\prod_{j=1}^{L-1}\det F_j^{(n)}=
	\left(\frac{\gamma_n}{\mu_r\beta_n}\right)^L
	\left(\frac{\mu_r\beta_n}{\gamma_n}\right)^{L-1}=\frac{\gamma_n}{\mu_r\beta_n}.
	\label{det:G}
\end{equation}
For the zeroth Fourier mode, the reduced system \eqref{reduced:system} becomes
\begin{equation}\label{reduced:u0}
	M_L^{(0)}c_{1,0}^{(0)}=
	\begin{pmatrix}
		\mu_r\rho\\
		0
	\end{pmatrix}.
\end{equation}
For \(n=0\), it follows from \cref{matrix:R} and \cref{eq:matrix_MG} that
\[
M_L^{(0)}=
\begin{pmatrix}
	i(\eta-\mu_r k)e^{i\eta H} & i(\eta + \mu_r k) e^{-i\eta H}\\
	G^{(0)}_{L,11}+G^{(0)}_{L,21} & G^{(0)}_{L,12}+G^{(0)}_{L,22}
\end{pmatrix},
\]
Cramer's rule applied to \cref{reduced:u0} gives
\[
c_{1,0}^{(0)} = \frac{\mu_r\rho}{\det M_L^{(0)}}
\begin{pmatrix}
	G_{L,12}^{(0)}+G_{L,22}^{(0)}\\
	-G_{L,11}^{(0)}-G_{L,21}^{(0)}
\end{pmatrix}.
\]
On the other hand, the homogeneous transmission conditions imply
\[
c_{2L,0}^{(0)} = G_L^{(0)}c_{1,0}^{(0)}.
\]
Substituting the preceding expression and simplifying, we obtain
\begin{align*}
	c_{2L,0}^{(0)} =
	\frac{\mu_r\rho}{\det M_L^{(0)}}
	\begin{pmatrix}
		\det G_L^{(0)}\\
		-\det G_L^{(0)}
	\end{pmatrix}
    =
	\frac{\rho\eta}{k\det M_L^{(0)}}
	\begin{pmatrix}
		1\\-1
	\end{pmatrix}.
\end{align*}

Consequently, the leading-order field in the bottom air region is
\begin{equation}\label{solution:u0}
	u_0(y)= \frac{2i\rho\eta}{k\det M_L^{(0)}}\sin(ky), \quad 0<y<a_L.
\end{equation}
In particular, $u_0$ is independent
of the horizontal variable $x$.

\paragraph{First-order field.}

For $m=1$, we have $\rho_1^{(n)}=0$. Using \cref{eq:v_m} and the fact that
$u_0$ depends only on $y$, we obtain
\begin{equation}\label{source:v1}
	v_1^{(n)}(y) =
	\frac{g^{(n)}}{a_L}
	\left[
	-\alpha_n^2(a_L-y)u_0'(y)+2k^2u_0(y)
	\right],
	\quad 0<y<a_L.
\end{equation}
Moreover, the interface source satisfies
\begin{equation}\label{source:tau1}
	\tau_1^{(n)}=\frac{\mu_r g^{(n)}}{a_L}u_0'(a_L),
\end{equation}
where we have used the transmission condition
at $\Gamma_{2L}$.

To determine the first-order field, we only need the combination of the two source terms appearing
in the reduced system \cref{reduced:system}. Using the explicit form of $V_{\mathrm a}^{(n)}(a_L)$, together with the definitions of $h_{L,1,n}^{(1)}$ and $h_{L,1,n}^{(1)}$ in \cref{source:h1} and \cref{source:h2}, we obtain
\begin{align}\label{source:combination}
	R_{2L}\bigl[V_{\mathrm a}^{(n)}(a_L)\bigr]^{-1}
	\begin{pmatrix}
		h_{L,1,n}^{(1)}\\
		h_{L,1,n}^{(2)}
	\end{pmatrix}
    &=\cos(\beta_n a_L)h_{L,1,n}^{(1)}
	-\frac{\sin(\beta_n a_L)}
	{\mu_r\beta_n}h_{L,1,n}^{(2)}
	\notag\\
	&=-\frac{1}{\beta_n}
	\left[
	\int_0^{a_L}
	\sin(\beta_n z)v_1^{(n)}(z)\,dz+\frac{\sin(\beta_n a_L)}{\mu_r}
	\tau_1^{(n)}
	\right].
\end{align}
To simplify the integral, we use the explicit expression
$u_0(z)=u_0'(0)\sin(kz)/k$ and the elementary
trigonometric identity
\begin{equation*}
	\int_0^a \sin(\beta z)
	\left[
	-(k^2-\beta^2)(a-z)\cos(kz)+2k\sin(kz)
	\right]dz
	=a\beta-\sin(\beta a)\cos(ka).
\end{equation*}
Together with $\alpha_n^2=k^2-\beta_n^2$, this gives
\begin{equation*}
	\int_0^{a_L}\sin(\beta_n z)
	\left[
	-\alpha_n^2(a_L-z)u_0'(z)
	+2k^2u_0(z)
	\right]dz
	+\sin(\beta_n a_L)u_0'(a_L)
	=a_L\beta_nu_0'(0).
\end{equation*}
Substituting this result into \cref{source:combination}
and using \cref{source:v1,source:tau1,solution:u0},we obtain
\begin{equation}\label{source:simplified}
	R_{2L}\bigl[V_{\mathrm a}^{(n)}(a_L)\bigr]^{-1}
	\begin{pmatrix}
		h_{L,1,n}^{(1)}\\
		h_{L,1,n}^{(2)}
	\end{pmatrix}
	=-u_0'(0)g^{(n)}
	=-\frac{2i\rho\eta}
	{\det M_L^{(0)}}g^{(n)},
\end{equation}
where the last equality follows from \cref{solution:u0}.
Combining \eqref{reduced:system} and \cref{source:simplified}, the first-order coefficients in the uppermost slab satisfy
\begin{equation}\label{reduced:first-order}
	M_L^{(n)}c_{1,1}^{(n)} =
	\begin{pmatrix}
		0\\[1mm]
		-\dfrac{2i\rho\eta}
		{\det M_L^{(0)}}g^{(n)}
	\end{pmatrix}.
\end{equation}
Solving this $2\times2$ system gives
\begin{equation}\label{coeff:first-order}
	c_{1,1}^{(n)}=
	\frac{2\rho\eta\,g^{(n)}}
	{\det M_L^{(0)}\det M_L^{(n)}}
	\begin{pmatrix}
		(\gamma_n+\mu_r\beta_n)e^{-i\gamma_n H}\\
		(\gamma_n-\mu_r\beta_n)e^{i\gamma_n H}
	\end{pmatrix}.
\end{equation}

Therefore, in the uppermost slab $\mathcal D_1$, the first-order field is
\begin{equation}\label{u1}
	u_1^{(n)}(y)=
	\frac{2\rho\eta}
	{\det M_L^{(0)}\det M_L^{(n)}}
	\Big[
	(\gamma_n+\mu_r\beta_n)
	e^{i\gamma_n(y-H)}+
	(\gamma_n-\mu_r\beta_n)
	e^{i\gamma_n(H-y)}
	\Big]g^{(n)}.
\end{equation}
Evaluating \cref{u1} at the measurement boundary $y=H$ yields
\begin{equation}\label{u1:measurement}
	u_1^{(n)}(H)=
	\frac{4\rho\eta\gamma_n}{\det M_L^{(0)}\det M_L^{(n)}}g^{(n)}.
\end{equation}
The preceding analysis leads to the following reconstruction theorem.

\begin{thm}\label{thm:reconstruction}
	Let \(f(x)=\delta g(x)\) with \(0<\delta\ll1\), and assume
	\[
	\mu_r\neq0,\quad \eta\neq0,\quad \beta_n\gamma_n\neq0,\quad
	\det M_L^{(0)}\neq0,\quad \det M_L^{(n)}\neq0.
	\]
	Then, at the linearized level, the Fourier coefficients satisfy
	\begin{equation}\label{recon}
		f^{(n)}=
		\Upsilon_L^{(n)}
		\left[
		u^{(n)}(H)-u_0^{(n)}(H)
		\right]
		+O(\delta^2),
	\end{equation}
	where the reconstruction factor is
	\begin{equation}\label{Upsilon2}
			\Upsilon_L^{(n)} =
			\frac{\det M_L^{(0)}\det M_L^{(n)}}{4\rho\eta\gamma_n}
	\end{equation}
\end{thm}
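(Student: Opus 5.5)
The argument assembles the first-order computation of \cref{subsec:reconstruction} into a statement about the measured data. It has three steps.

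\textbf{Step 1: expand the data.} The transformation used in the TFE acts only in $\Omega$, so it is the identity in $\mathcal D_1$. The measured field on $\Gamma_1$ is therefore the original field $u$ evaluated at $y=H$. Using the expansion \cref{eq:expansion:u}, its $n$th Fourier coefficient is
\[
u^{(n)}(H)=u_0^{(n)}(H)+\delta u_1^{(n)}(H)+O(\delta^2).
\]
This requires the series to converge, with the remainder $\sum_{m\ge2}\delta^{m-2}u_m^{(n)}(H)$ bounded uniformly in $\delta$. I would take this from the standard analyticity results for TFE on low-profile surfaces \cite{Nicholls2004a,NichollsShen2009}. They apply because, under $\det M_L^{(n)}\neq0$, each modal problem \cref{eq:Fourier:um} is uniquely solvable (via \cref{lem:lem1}), so the recursion \cref{eq:Model:um} defines every $u_m$. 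I regard this as the only non-computational point. At the linearized level of the theorem it may simply be taken as the meaning of the $O(\delta^2)$ term.

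\textbf{Step 2: insert the first-order field.} By \cref{u1:measurement},
\[
u_1^{(n)}(H)=\frac{4\rho\eta\gamma_n}{\det M_L^{(0)}\det M_L^{(n)}}\,g^{(n)}.
\]
That formula relied on several facts established above: \cref{lem:lem1} with $\mu_r\beta_n\gamma_n\neq0$; $u_0$ having only the zeroth mode, from \cref{solution:u0}; the source simplification \cref{source:simplified}; and Cramer's rule for \cref{reduced:first-order}, which needs $\det M_L^{(n)}\neq0$. The hypotheses of the theorem are exactly what these steps require. In addition, $\rho\neq0$ by its definition, $\eta\neq0$, and $\gamma_n\neq0$, so the prefactor $4\rho\eta\gamma_n/(\det M_L^{(0)}\det M_L^{(n)})$ is nonzero and finite.

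\textbf{Step 3: invert.} Multiply the expansion from Step~1 by $\Upsilon_L^{(n)}$, which is the reciprocal of that prefactor, and use $f^{(n)}=\delta g^{(n)}$:
\[
\Upsilon_L^{(n)}\bigl[u^{(n)}(H)-u_0^{(n)}(H)\bigr]=\delta g^{(n)}+O(\delta^2)=f^{(n)}+O(\delta^2).
\]
Since $\Upsilon_L^{(n)}$ does not depend on $\delta$, the remainder stays $O(\delta^2)$ for each fixed $n$, and this is \cref{recon}.

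The main obstacle is the justification in Step~1 of the remainder bound, i.e.\ convergence of the TFE series. It is not uniform in $n$, and that non-uniformity is exactly what the later resolution analysis quantifies.
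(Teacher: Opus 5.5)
Your proposal is correct and is essentially the paper's proof: expand $u^{(n)}(H)=u_0^{(n)}(H)+\delta u_1^{(n)}(H)+O(\delta^2)$ via the TFE, insert \cref{u1:measurement}, and use $f^{(n)}=\delta g^{(n)}$; your discussion of the remainder is extra care that the paper omits. One small correction: the facts that $u_0$ carries only the zeroth mode and that the recursion defines every $u_m$ require unique solvability of the modal problems for all indices $n'$, which the paper assumes implicitly, so they do not follow from \cref{solution:u0} or from the listed hypotheses $\det M_L^{(0)}\neq0$, $\det M_L^{(n)}\neq0$ alone.
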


\begin{proof}
	By the transformed field expansion \cref{eq:expansion:u},
	\[
	u^{(n)}(H)=u_0^{(n)}(H) +\delta u_1^{(n)}(H) +O(\delta^2).
	\]
	Since $f^{(n)}=\delta g^{(n)}$, substituting \cref{u1:measurement} into this
	expansion yields \cref{recon}.
\end{proof}

The reconstruction theorem gives an explicit relation between the Fourier
coefficients of the periodic surface and those of the measured field. For a
prescribed truncation index $N\in\mathbb N$, we define the linearized
reconstruction by
\begin{equation}\label{reconstruction:truncated}
	f_N(x) =
	\sum_{|n|\leq N}
	\Upsilon_L^{(n)}
	\left[
	u^{(n)}(H)-u_0^{(n)}(H)
	\right]e^{i\alpha_nx}.
\end{equation}
The reconstruction can thus be implemented directly in Fourier space, without
solving a nonlinear optimization problem. Since the magnitude of
$\Upsilon_L^{(n)}$ controls the amplification of measurement errors in each
Fourier mode, the reconstruction of high spatial-frequency components is
governed by the asymptotic behavior of $\det M_L^{(n)}$. In the following
sections, we study this behavior for perfectly matched parameters, derive
first-order perturbation formulas for small material deviations, and establish
high-frequency asymptotic estimates, which will also reveal how the number of
layers affects reconstruction stability and potential resolution enhancement.

\section{Perfectly Matched Parameters}\label{sec:PMP}

In this section, we consider the perfectly matched case $\varepsilon_r=\mu_r=-1$, for which the reconstruction factor admits an explicit expression. This special case provides a theoretical resolution criterion and serves as a foundation for the subsequent analysis of imperfect material parameters.

For convenience, we introduce
\begin{equation}\label{theta}
	\theta_n=\frac{\gamma_n}{\mu_r\beta_n},
\end{equation}
and define
\begin{alignat}{2}
	p_n&=\frac{1+\theta_n}{2},
	&\qquad q_n&=\frac{1-\theta_n}{2},
	\label{pq}\\
	\widehat p_n&=\frac{1+\theta_n^{-1}}{2},
	&\qquad \widehat q_n&=\frac{1-\theta_n^{-1}}{2}.
	\label{hatpq}
\end{alignat}
We also define the cumulative thickness differences
\begin{equation}\label{Delta_j}
	\Delta_j=\sum_{\ell=1}^{j}(s_\ell-a_\ell),
	\quad j=1,\ldots,L,
	\quad \Delta_0=0.
\end{equation}
In particular, the difference between the total slab and air thicknesses is given by
\begin{equation}\label{DeltaD}
	\Delta_D=\Delta_L=D_s-D_a.
\end{equation}
where $D_s$ and $D_a$ denote the total thicknesses of the slab and air layers, respectively, as defined
in \cref{eq:totalthickness}.

To simplify the products of the transfer matrices,
we introduce the off-diagonal matrix
\begin{equation}\label{matrix:J}
	J_\beta(x)=
	\begin{pmatrix}
		0&e^{-2i\beta x}\\
		e^{2i\beta x}&0
	\end{pmatrix},
	\quad \beta\in\mathbb C.
\end{equation}
The following lemma establishes a useful multiplication property of these matrices.
\begin{lem}\label{lemma:Jproduct}
	Let $\beta\in\mathbb C$ and $x_1,\ldots,x_m\in\mathbb R$.
	Set
	\[
	\xi_m=\sum_{j=1}^{m}(-1)^{j-1}x_j.
	\]
	Then
	\begin{equation}\label{matrix:Jproduct}
		J_\beta(x_1)J_\beta(x_2)\cdots J_\beta(x_m)
		=
		\begin{cases}
			J_\beta(\xi_m),
			&m\text{ is odd},\\[2mm]
			\begin{pmatrix}
				e^{-2i\beta\xi_m}&0\\
				0&e^{2i\beta\xi_m}
			\end{pmatrix},
			&m\text{ is even}.
		\end{cases}
	\end{equation}
\end{lem}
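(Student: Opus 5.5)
The plan is induction on $m$, resting on two elementary products. The first is the product of two $J$'s:
\[
J_\beta(x)J_\beta(y)=\begin{pmatrix}0&e^{-2i\beta x}\\ e^{2i\beta x}&0\end{pmatrix}\begin{pmatrix}0&e^{-2i\beta y}\\ e^{2i\beta y}&0\end{pmatrix}=\begin{pmatrix}e^{-2i\beta(x-y)}&0\\0&e^{2i\beta(x-y)}\end{pmatrix}.
\]
The second is the product of a diagonal matrix and a $J$. Writing $D(\xi)=\mathrm{diag}(e^{-2i\beta\xi},e^{2i\beta\xi})$, we have
\[
D(\xi)J_\beta(x)=\begin{pmatrix}0&e^{-2i\beta(\xi+x)}\\ e^{2i\beta(\xi+x)}&0\end{pmatrix}=J_\beta(\xi+x).
\]

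The induction runs as follows.

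\textbf{Base case $m=1$.} Here $\xi_1=x_1$, so the claim is trivial.

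\textbf{Case $m$ odd, $m+1$ even.} Assume $J_\beta(x_1)\cdots J_\beta(x_m)=J_\beta(\xi_m)$. Multiplying on the right by $J_\beta(x_{m+1})$ and using the first identity gives $D(\xi_m-x_{m+1})$. Since $m+1$ is even, the sign $(-1)^{m}$ equals $-1$, so $\xi_{m+1}=\xi_m-x_{m+1}$. This is exactly the claimed diagonal form.

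\textbf{Case $m$ even, $m+1$ odd.} Assume the product equals $D(\xi_m)$. Multiplying on the right by $J_\beta(x_{m+1})$ and using the second identity gives $J_\beta(\xi_m+x_{m+1})$. Since $m$ is even, $(-1)^m=+1$, so $\xi_{m+1}=\xi_m+x_{m+1}$. This closes the induction.

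There is no genuine obstacle here. The only point requiring care is the sign bookkeeping in $\xi_{m+1}=\xi_m+(-1)^m x_{m+1}$: each parity case must pick up the correct sign.

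The argument is purely algebraic and uses only that the entries are exponentials. It therefore holds for complex $\beta$ with no restriction.
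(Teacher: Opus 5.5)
Your proof is correct and takes essentially the same approach as the paper: direct multiplication of small products followed by induction on $m$. The paper records $J_\beta(x)J_\beta(y)$ and the three-factor identity $J_\beta(x)J_\beta(y)J_\beta(z)=J_\beta(x-y+z)$ and leaves the induction implicit, whereas your diagonal-times-$J$ identity spells out the one-step induction and the sign bookkeeping $\xi_{m+1}=\xi_m+(-1)^m x_{m+1}$.
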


\begin{proof}
	Direct multiplication gives
	\[
	J_\beta(x)J_\beta(y)
	=
	\begin{pmatrix}
		e^{-2i\beta(x-y)}&0\\
		0&e^{2i\beta(x-y)}
	\end{pmatrix},
	\]
	and
	\[
	J_\beta(x)J_\beta(y)J_\beta(z)
	=J_\beta(x-y+z).
	\]
	The general result follows by induction.
\end{proof}

\subsection{Exact expression of the transfer matrix}

By \Cref{lem:lem1}, the reconstruction factor is essentially governed by the matrix $G_L^{(n)}$. Using the definitions of $E_j^{(n)}$ and $F_j^{(n)}$, we obtain
\begin{align}
	E_j^{(n)}&=
	\begin{pmatrix}
		p_ne^{i(\gamma_n-\beta_n)d_{2j}}
		&
		q_ne^{-i(\gamma_n+\beta_n)d_{2j}}
		\\
		q_ne^{i(\gamma_n+\beta_n)d_{2j}}
		&
		p_ne^{-i(\gamma_n-\beta_n)d_{2j}}
	\end{pmatrix},\label{eq:matrixE}\\
	F_j^{(n)}&=
	\begin{pmatrix}
		\hat p_ne^{-i(\gamma_n-\beta_n)d_{2j+1}}
		&
		\hat q_ne^{-i(\beta_n+\gamma_n)d_{2j+1}}
		\\
		\hat q_ne^{i(\beta_n+\gamma_n)d_{2j+1}}
		&
		\hat p_ne^{i(\gamma_n-\beta_n)d_{2j+1}}
	\end{pmatrix}. \label{eq:matrixF}
\end{align}

For the perfectly matched parameters, we have
\[
\eta=k,\quad
\gamma_n=\beta_n.
\]
Moreover,
\[
\theta_n=-1,
\]
which gives
\[
p_n=\hat p_n=0,\quad q_n=\hat q_n=1.
\]
Therefore, the matrices above reduce to
\begin{equation}\label{eq:perfecttEF}
	E_j^{(n)}= J_{\beta_n}(d_{2j}), \quad
	F_j^{(n)}=J_{\beta_n}(d_{2j+1}).
\end{equation}
Applying \Cref{lemma:Jproduct} to \cref{eq:perfecttEF}, we arrive at
\begin{equation} \label{eq:perfectG}
	G_L^{(n)}=J_{\beta_n}(d_{2L})J_{\beta_n}(d_{2L-1})
	\cdots J_{\beta_n}(d_2)=
	\begin{pmatrix}
		0&e^{-2i\beta_nD_a}\\
		e^{2i\beta_nD_a}&0
	\end{pmatrix},
\end{equation}
By \Cref{lem:lem1} and \cref{matrix:R}, it follows that
\begin{equation}\label{detMperfect}
	\det M_L^{(n)}=2i \beta_n e^{i\beta_n(H-2D_a)}.
\end{equation}
Since
\[
	H-2D_a=D_s-D_a=\Delta_D,
\]
we conclude that
\begin{equation}\label{detMperfect_final}
		\det M_L^{(n)}=2i\beta_ne^{i\beta_n\Delta_D}.
\end{equation}

\subsection{Resolution analysis}

Substituting \eqref{detMperfect_final} into the reconstruction factor
\eqref{Upsilon2} and using \(\rho=-2ik e^{-ikH}\), we obtain the reconstruction factor in
the perfect case:
\begin{equation}\label{Upsilonperfect}
	\Upsilon_{L,p}^{(n)}=
	\frac{ e^{i\beta_n\Delta_D} e^{ik(H+\Delta_D)} } {2ik}.
\end{equation}
For large spatial frequencies, $\beta_n\sim i\alpha_n$ as $n\to\infty$, and hence
\begin{equation}\label{Upsilonasym}
	\left|
	\Upsilon_{L,p}^{(n)}
	\right| =\frac{e^{-\alpha_n\Delta_D}}{2k}.
\end{equation}
\Cref{Upsilonasym} provides the resolution criterion. If $\Delta_D<0$, i.e.\ the
total slab thickness is smaller than the total air thickness, the reconstruction
factor grows exponentially with $|n|$, leading to unstable high-frequency
reconstruction. If $\Delta_D=0$, the reconstruction factor remains bounded,
$|\Upsilon_{L,p}^{(n)}|=1/(2k)$, so all spatial frequencies can in principle be
reconstructed without exponential amplification. Finally, if $\Delta_D>0$, the
reconstruction factor decays exponentially as $|n|\to\infty$, corresponding to
stable reconstruction of arbitrarily high spatial frequencies.

\begin{thm}\label{theorem:perfect}
	For the perfectly matched parameters $\epsilon_r=\mu_r=-1$, the reconstruction
	formula \eqref{recon} achieves unlimited resolution provided that
	$D_s\geq D_a$.
\end{thm}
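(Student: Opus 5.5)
The plan is to read ``unlimited resolution'' as the statement that the reconstruction factor $\Upsilon_L^{(n)}$ in \Cref{thm:reconstruction} stays uniformly bounded in $n$. Then measurement errors in every Fourier mode are amplified at most by a fixed constant, and the truncation index $N$ in \cref{reconstruction:truncated} can be taken arbitrarily large. So the proof amounts to bounding $\sup_{n\in\mathbb Z}|\Upsilon_{L,p}^{(n)}|$ under $\Delta_D=D_s-D_a\ge 0$, using the closed form \cref{Upsilonperfect}.

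First, I will check that the hypotheses of \Cref{thm:reconstruction} hold in the matched case $\varepsilon_r=\mu_r=-1$.
\begin{itemize}
\item We have $\eta=k\neq0$ and $\gamma_n=\beta_n$.
\item By \cref{detMperfect_final}, $\det M_L^{(n)}=2i\beta_n e^{i\beta_n\Delta_D}$, which is nonzero whenever $\beta_n\neq0$.
\item In particular $\det M_L^{(0)}=2ik e^{ik\Delta_D}\neq0$.
\end{itemize}
Hence, for all $n$ with $\alpha_n\neq k$ (the non-degenerate modes assumed throughout), formula \cref{recon} applies with $\Upsilon_L^{(n)}=\Upsilon_{L,p}^{(n)}$.

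Second, I will estimate the factor directly from \cref{Upsilonperfect}, namely $|\Upsilon_{L,p}^{(n)}|=|e^{i\beta_n\Delta_D}|/(2k)$, since $|e^{ik(H+\Delta_D)}|=1$ for real $k$. I split into propagating and evanescent modes.
\begin{itemize}
\item For $|\alpha_n|<k$, $\beta_n$ is real, so the modulus equals $1/(2k)$.
\item For $|\alpha_n|>k$, the branch choice gives $\beta_n=i\sqrt{\alpha_n^2-k^2}$. Then $|e^{i\beta_n\Delta_D}|=e^{-\sqrt{\alpha_n^2-k^2}\,\Delta_D}\le1$ precisely because $\Delta_D\ge0$.
\end{itemize}
This gives the uniform bound $|\Upsilon_{L,p}^{(n)}|\le 1/(2k)$ for all $n$. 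This is sharper than the asymptotic statement \cref{Upsilonasym}: it is exact, with no appeal to $n\to\infty$, and it decays exponentially when $\Delta_D>0$.

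Finally, I will conclude stability. If the data $u^{(n)}(H)$ carry errors $e^{(n)}$ with, say, $\|e\|_{\ell^2}\le\epsilon$, the error in $f_N$ is bounded in $L^2$ by $\epsilon/(2k)$ times a normalization constant, independently of $N$. Thus every Fourier mode of $f$ is recovered stably at the linearized level, which is the claimed unlimited resolution. This holds for any number of layers and any arrangement, since only $\Delta_D$ enters.

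The only delicate point is interpretive rather than computational. The statement must be formalized as the uniform bound above, and one must note the excluded resonant modes $\beta_n=0$, where \Cref{thm:reconstruction} does not apply. I would handle these by remark: they form at most two indices and can be treated by the limit $\beta_n\to0$, in which $\Upsilon_{L,p}^{(n)}\to e^{ik(H+\Delta_D)}/(2ik)$ remains bounded.
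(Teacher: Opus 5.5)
Your proposal is correct and follows the paper's own argument: substitute the perfectly matched determinant $\det M_L^{(n)}=2i\beta_n e^{i\beta_n\Delta_D}$ into $\Upsilon_L^{(n)}$ to obtain \eqref{Upsilonperfect}, and observe that its modulus is governed by $|e^{i\beta_n\Delta_D}|$, which stays bounded in $n$ precisely when $\Delta_D=D_s-D_a\ge0$. Your split into propagating and evanescent modes gives the exact bound $|\Upsilon_{L,p}^{(n)}|\le 1/(2k)$ for every $n$ with $\beta_n\neq0$, which is slightly cleaner than the paper's asymptotic use of $\beta_n\sim i\alpha_n$. Your remark on the modes with $\beta_n=0$ is only heuristic, but those modes are excluded by the hypotheses of \Cref{thm:reconstruction} anyway.
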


\Cref{theorem:perfect} shows that, in the perfectly matched case, the resolution
property depends only on the total slab thickness, not on the specific layer
arrangement. Hence any multilayer structure with the same total slab thickness
satisfying $D_s\ge D_a$ enjoys the same unlimited resolution property, which
provides a useful guideline for the practical design of multilayer superlensing
systems. This conclusion, however, relies on the perfect matching condition;
once material loss is present, the resolution property is affected by the layer
arrangement, as we shall see in the next section.
\section{First-Order Perturbation Analysis}\label{sec:FOPA}

In this section, we investigate the effect of small deviations from the perfectly matched material parameters on the reconstruction factor $\Upsilon_L^{(n)}$. In particular, we derive its first-order expansion for each fixed Fourier mode $n$. The resulting formula will allow us to distinguish the contributions of material perturbations and the multilayer geometry.

We consider the perturbed material parameters
\begin{equation}\label{Imperfect_emu}
	\varepsilon_r(t)=-1+t\varepsilon_1,
	\quad
	\mu_r(t)=-1+t\mu_1,
\end{equation}
where $\varepsilon_1,\mu_1\in\mathbb C$ are fixed and $t\in\mathbb R$ is a sufficiently small
perturbation parameter. Throughout this section, both the number of layers $L$ and the Fourier
mode $n$ are fixed, and we assume $\beta_n\neq0$.

\subsection{Perturbation of the Transfer Matrices}
\label{subsec:transfer-perturbation}
We begin by expanding $\gamma_n$ and $\theta_n$ to first order in $t$.
Combining \cref{Imperfect_emu} with the definition of $\gamma_n$, we arrive at
\begin{equation}\label{gamma_nt}
	\gamma_n(t)= \beta_n+\chi_n t +O(t^2),
\end{equation}
where
\begin{equation}\label{chi_n}
	\chi_n =-\frac{k^2(\varepsilon_1+\mu_1)}{2\beta_n}.
\end{equation}
Along the same lines, the definition of $\theta_n$ in \cref{theta} gives
\begin{equation}\label{thetat}
	\theta_n(t)=-1+\sigma_n t +O(t^2), \quad
	\theta_n^{-1}(t)=-1-\sigma_n t +O(t^2),
\end{equation}
where
\begin{equation}\label{sigma_n}
	\sigma_n=-\mu_1-\frac{\chi_n}{\beta_n}
	=-\mu_1+\frac{k^2(\varepsilon_1+\mu_1)}{2\beta_n^2}.
\end{equation}
Here $\chi_n$ measures the first-order perturbation of the vertical wavenumber,
while $\sigma_n$ measures the first-order deviation of the interface matching
parameter. Combining \cref{pq,hatpq} yields
\begin{alignat}{2}
	p_{n} &= \frac{\sigma_{n}}{2}t + O(t^{2}), \quad
	& q_{n} &= 1-\frac{\sigma_{n}}{2}t+O(t^{2}), \label{pqt}\\
	\hat{p}_{n} &= -\frac{\sigma_{n}}{2}t + O(t^{2}), \quad
	& \hat{q}_{n} &= 1+\frac{\sigma_{n}}{2}t+O(t^{2}). \label{hatpqt}
\end{alignat}

To express the perturbations of the transfer matrices compactly, we first
recall the off-diagonal matrix $J_\beta(x)$ defined in \cref{matrix:J}.
Its derivative with respect to $\beta$ is
\begin{equation}\label{J:derivative}
	\frac{\partial J_\beta(x)}{\partial\beta}=
	\begin{pmatrix}
		0&-2ix e^{-2i\beta x}\\
		2ix e^{2i\beta x}&0
	\end{pmatrix}.
\end{equation}
Substituting \cref{gamma_nt,thetat,pqt,hatpqt} into the explicit expressions
\cref{eq:matrixE,eq:matrixF}, we arrive at
\begin{alignat}{2}
	E_j^{(n)}(t)&=J_{\beta_n}(d_{2j})+tE_{j,1}^{(n)}+O(t^2),
	&\quad&j=1,\ldots,L,\label{expansion_E}\\
	F_j^{(n)}(t)&=J_{\beta_n}(d_{2j+1})+tF_{j,1}^{(n)}+O(t^2),
	&\quad&j=1,\ldots,L-1,
	\label{expansion_F}
\end{alignat}
where
\begin{align}
	E_{j,1}^{(n)}&=
	\frac{\chi_n}{2}
	\left.
	\frac{\partial J_\beta(d_{2j})}{\partial\beta}
	\right|_{\beta=\beta_n}
	+\frac{\sigma_n}{2}\bigl[I-J_{\beta_n}(d_{2j})\bigr],\label{matrix_E1}\\
	F_{j,1}^{(n)}&=
	\frac{\chi_n}{2}
	\left.
	\frac{\partial J_\beta(d_{2j+1})}{\partial\beta}
	\right|_{\beta=\beta_n}
	-\frac{\sigma_n}{2}\bigl[I-J_{\beta_n}(d_{2j+1})\bigr].\label{matrix_F1}
\end{align}
Here $I$ denotes the $2\times2$ identity matrix.
By \cref{gamma_nt,sigma_n}, the upper boundary row \(R_1^{(n)}\) admits the expansion
\begin{equation}\label{expansion_R}
	R_1^{(n)}(t)
	=
	R_{1,0}^{(n)}
	+tR_{1,1}^{(n)}
	+O(t^2),
\end{equation}
where
\begin{align}
	R_{1,0}^{(n)} &=
	\begin{pmatrix}
		2i\beta_n e^{i\beta_n H} & 0
	\end{pmatrix}, \label{R10} \\
	R_{1,1}^{(n)} &=(2i-2\beta_n H)\chi_n
	\begin{pmatrix}
		e^{i\beta_n H} & 0
	\end{pmatrix} +
	i\beta_n\sigma_n
	\begin{pmatrix}
		e^{i\beta_n H} & e^{-i\beta_n H}
	\end{pmatrix}. \label{R11}
\end{align}

\subsection{First-Order Expansion of the Reconstruction Factor}
\label{subsec:reconstruction-perturbation}
The expansions derived above provide the ingredients needed to determine the
first-order perturbation of $\det M_L^{(n)}$, which we now establish.

\begin{lem}\label{lem:first-order-det}
	Let $n\in\mathbb Z$ and $L\geq1$ be fixed, with $\beta_n\neq0$. Under the material
	perturbations \cref{Imperfect_emu}, the reduced determinant satisfies
	\begin{equation}\label{matrix_M1}
				\det M_L^{(n)}(t)=
				2i\beta_n e^{i\beta_n\Delta_D}
				\left[ 
				1+ \left( \chi_n\left(\frac{1}{\beta_n}+iD_s\right)
				+\sigma_n Q_{L,n} \right)t+O(t^2)
				\right],
	\end{equation}
	where the geometry-dependent factor $Q_{L,n}$ is defined by
	\begin{equation}\label{Q_Ln}
			Q_{L,n}=\frac12 e^{-2i\beta_n\Delta_D}\sum_{j=1}^{L}
			e^{2i\beta_n\Delta_{j-1}} \left( e^{2i\beta_n s_j}-1 \right).
	\end{equation}
\end{lem}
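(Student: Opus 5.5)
Differentiating at $t=0$ is the natural route. Write $\det M_L^{(n)}(t)=R_1^{(n)}(t)\,w(t)$ with $w=(G_{L,12}+G_{L,22},\;-(G_{L,11}+G_{L,21}))^{\top}$, which is the form \cref{eq:det:M} takes since $\det M=R_1\cdot(\text{col}_2\text{ sum}, -\text{col}_1\text{ sum})$ of $R_{2L}G_L$. Expanding both factors gives
\[
\det M_L^{(n)}(t)=R_{1,0}^{(n)}w_0+t\bigl(R_{1,1}^{(n)}w_0+R_{1,0}^{(n)}w_1\bigr)+O(t^2).
\]
From \cref{eq:perfectG}, $w_0=(e^{-2i\beta_nD_a},-e^{2i\beta_nD_a})^\top$. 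The zeroth-order term is then $2i\beta_ne^{i\beta_n\Delta_D}$, as in \cref{detMperfect_final}. Using \cref{R11}, the boundary contribution $R_{1,1}^{(n)}w_0$ is explicit: a $\chi_n$ term $(2i-2\beta_nH)\chi_ne^{i\beta_n(H-2D_a)}$ and a $\sigma_n$ term $i\beta_n\sigma_n(e^{i\beta_n\Delta_D}-e^{-i\beta_n\Delta_D})$.

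The main work is $w_1$, which needs the first-order term $G_{L,1}$ of the product. Write $G_L=P_{2L-1}\cdots P_1$ with $P_{2j-1}=E_j$ and $P_{2j}=F_j$. By the product rule, $G_{L,1}=\sum_k P^0_{2L-1}\cdots P^0_{k+1}\,P_{k,1}\,P^0_{k-1}\cdots P^0_1$, where each $P^0$ is a $J_{\beta_n}(d_\cdot)$. I will split each $P_{k,1}$ using \cref{matrix_E1,matrix_F1} into a $\chi_n$ part and a $\sigma_n$ part.

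For the $\chi_n$ part, $\chi_n\partial_\beta J/2$ is exactly the $\beta$-derivative of a $J$ factor. The $\chi_n$ parts therefore sum to $\tfrac{\chi_n}{2}\partial_\beta$ of the unperturbed product, except that the true unperturbed product equals $J(D_a)$ only after the alternating sum is used, so each factor must be differentiated separately. A cleaner route: the $\chi_n$ contributions change $\gamma_n$ only inside the exponentials, so the product is $J$-type with phase $\sum$ over slab interfaces weighted by $\gamma$. Concretely, $E_jF_{j-1}$ involves $\gamma(d_{2j-1}-d_{2j})=\gamma s_j$. Hence the $\chi_n$ contribution of $G_{L,1}$ equals $\tfrac{\chi_n}{2}$ times $\partial$ with respect to the slab wavenumber of $J$ with phase $\beta D_a$ replaced by $H-\gamma D_s-\beta(\ldots)$ etc. Combined with the $-2\beta_nH\chi_n$ term from $R_{1,1}$, the $H$-dependence should cancel, leaving $\chi_n(1/\beta_n+iD_s)$ after dividing by $2i\beta_ne^{i\beta_n\Delta_D}$. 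I will verify this by a direct phase-tracking check for $L=1$ and then argue inductively.

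The $\sigma_n$ part is the main obstacle. Each inserted factor is $\pm\tfrac{\sigma_n}{2}(I-J(d))$. Replacing $J$ by $I$ breaks the alternation, and by \cref{lemma:Jproduct} the surrounding products become diagonal or off-diagonal with phases given by partial alternating sums of the $d$'s. These partial sums are expressible through $\Delta_{j}$, using $d_{2j-1}-d_{2j}=s_j$ and $d_{2j}-d_{2j+1}=a_j$. Pairing the contribution from $E_j$ (at $d_{2j}$) with that from $F_{j-1}$ (at $d_{2j-1}$) should give terms $e^{2i\beta_n\Delta_{j-1}}(e^{2i\beta_n s_j}-1)$, with the $-I$ halves telescoping against the $-J$ halves. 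I will then combine these with $i\beta_n\sigma_n(e^{i\beta_n\Delta_D}-e^{-i\beta_n\Delta_D})$ from $R_{1,1}$. The expectation is that this boundary term absorbs the $j=L$ endpoint mismatch, yielding exactly $\sigma_nQ_{L,n}$ after normalization. I expect the careful bookkeeping of signs and phases here, including the endpoint term, to be the hardest step. To fix conventions I will check the result against $L=1$, where $Q_{1,n}=\tfrac12e^{-2i\beta_n\Delta_D}(e^{2i\beta_ns_1}-1)$.
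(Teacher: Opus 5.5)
Your outline follows the paper's proof. You differentiate at $t=0$ and split the derivative into the $R_{1,1}^{(n)}$ row and the $G_{L,1}^{(n)}$ row; your $R_{1,1}^{(n)}w_0+R_{1,0}^{(n)}w_1$ is the paper's multilinearity step. You then expand $G_{L,1}^{(n)}$ by the product rule and treat the $\chi_n$ and $\sigma_n$ pieces separately. Your $w_0$, zeroth-order term and $R_{1,1}^{(n)}w_0$ are correct.

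\textbf{The gap is the $\chi_n$ step.} Your first sentence there is exactly the paper's argument, but the caveat you then raise is unfounded. \Cref{lemma:Jproduct} holds identically in $\beta$ for fixed $x_j$, so you may differentiate $J_\beta(d_{2L})J_\beta(d_{2L-1})\cdots J_\beta(d_2)=J_\beta(D_a)$ in $\beta$. The product-rule sum of the $\chi_n$-insertions is therefore exactly $\tfrac{\chi_n}{2}\partial_\beta J_\beta(D_a)|_{\beta=\beta_n}$.

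The ``cleaner route'' you substitute misidentifies the phase. Freeze $p_n=\hat p_n=0$ and $q_n=\hat q_n=1$. Then every factor is $J_b(d_\cdot)$ with $b=(\gamma_n+\beta_n)/2$, so $G_L^{(n)}=J_b(D_a)$. Its $\gamma_n$-dependence enters only through $(\gamma_n+\beta_n)D_a$; neither $H$ nor $D_s$ appears. It is true that $E_jF_{j-1}=\mathrm{diag}(e^{i(\gamma_n+\beta_n)s_j},e^{-i(\gamma_n+\beta_n)s_j})$. However, the leftover factor $E_1=J_b(d_2)$ turns the total phase into $(\gamma_n+\beta_n)(d_2-\sum_{j\geq2}s_j)=(\gamma_n+\beta_n)D_a$.

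Consequently the $\chi_n$-part of $R_{1,0}^{(n)}w_1$ is $2\beta_nD_a\chi_ne^{i\beta_n\Delta_D}$. The slab thickness enters only when you add $(2i-2\beta_nH)\chi_ne^{i\beta_n\Delta_D}$ from $R_{1,1}^{(n)}w_0$ and use $H-D_a=D_s$. This gives $(2i-2\beta_nD_s)\chi_ne^{i\beta_n\Delta_D}$, hence $\chi_n(1/\beta_n+iD_s)$ after normalization. Placing $D_s$ in the phase of $G_L^{(n)}$, as your sketch suggests, would yield $iD_a$ instead of $iD_s$. No $L=1$ check or induction is needed.

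\textbf{The $\sigma_n$ plan is viable, but the bookkeeping you anticipate is off.} The $J$-halves do not telescope against the $I$-halves. Each of the $2L-1$ $J$-halves reproduces the full product $J_{\beta_n}(D_a)$. The coefficient is $-\tfrac{\sigma_n}{2}$ for the $L$ factors $E_j$ and $+\tfrac{\sigma_n}{2}$ for the $L-1$ factors $F_j$. They net to $-\tfrac{\sigma_n}{2}J_{\beta_n}(D_a)$, which contributes $-i\beta_n\sigma_ne^{i\beta_n\Delta_D}$.

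The $I$-halves give diagonal matrices $\mathrm{diag}(e^{2i\beta_nc},e^{-2i\beta_nc})$:
\begin{itemize}
\item from $E_j$, with $c=D_s-\Delta_{j-1}-s_j$ and coefficient $+\tfrac{\sigma_n}{2}$;
\item from $F_j$, with $c=D_s-\Delta_j$ and coefficient $-\tfrac{\sigma_n}{2}$.
\end{itemize}
Each contributes $2i\beta_ne^{i\beta_n(H-2c)}$ when paired with $R_{1,0}^{(n)}$.

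Pairing $E_j$ with $F_{j-1}$ gives $i\beta_n\sigma_ne^{-i\beta_n\Delta_D}e^{2i\beta_n\Delta_{j-1}}(e^{2i\beta_ns_j}-1)$. The unmatched remainder is $+i\beta_n\sigma_ne^{-i\beta_n\Delta_D}$, coming from $E_1$ because there is no $F_0$. So the endpoint mismatch sits at $j=1$, not at $j=L$.

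The boundary term $i\beta_n\sigma_n(e^{i\beta_n\Delta_D}-e^{-i\beta_n\Delta_D})$ cancels both leftovers exactly. What remains is $2i\beta_ne^{i\beta_n\Delta_D}\sigma_nQ_{L,n}$. With these corrections your argument coincides with the paper's.
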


\begin{proof}
	By \cref{eq:matrix_MG}, the reduced matrix is
	\[
	M_L^{(n)}(t)=
	\begin{pmatrix}
		R_1^{(n)}(t)\\
		R_{2L}G_L^{(n)}(t)
	\end{pmatrix},
	\]
	Combining the definition of $G_L^{(n)}$ in \cref{eq:matrix_MG} with \cref{expansion_E,expansion_F}, we arrive at
	\begin{equation}\label{expansion_G}
		G_L^{(n)}(t)=G_{L,0}^{(n)}+tG_{L,1}^{(n)}+O(t^2).
	\end{equation}
	By \Cref{lemma:Jproduct}, the leading-order matrix is
	\begin{equation}\label{matrix_G0}
		G_{L,0}^{(n)}=J_{\beta_n}(D_a)=
		\begin{pmatrix}
			0&e^{-2i\beta_nD_a}\\
			e^{2i\beta_nD_a}&0
		\end{pmatrix},
	\end{equation}
    and the first-order term $G_{L,1}^{(n)}$ is
    \begin{align}
    	G^{(n)}_{L,1}
    	&= E^{(n)}_{L,1} J_{\beta_{n}}(d_{2L-1})
    	\cdots J_{\beta_{n}}(d_{2})
    	+ J_{\beta_{n}}(d_{2L}) F^{(n)}_{L-1,1}
    	\cdots J_{\beta_{n}}(d_{2}) \nonumber \\
    	&\quad + J_{\beta_{n}}(d_{2L}) J_{\beta_{n}}(d_{2L-1})
    	\cdots F^{(n)}_{1,1}J_{\beta_{n}}(d_{2}) + J_{\beta_{n}}(d_{2L}) J_{\beta_{n}}(d_{2L-1})
    	\cdots J_{\beta_{n}}(d_{3})E^{(n)}_{1,1}.
    	\label{matrix_G1}
    \end{align}
	From \cref{matrix_E1,matrix_F1}, the terms involving $\chi_n$ and $\sigma_n$ are
	handled separately. Those proportional to $\chi_n$ are summed by differentiating
	the product identity in \Cref{lemma:Jproduct} with respect to $\beta_n$. They yield
  \begin{equation}\label{G11xi}
	\frac{\chi_n}{2}
	\left.
	\frac{\partial J_\beta(D_a)}
	{\partial\beta}
	\right|_{\beta=\beta_n} =
	\begin{pmatrix}
		0&-iD_{a}\chi_n e^{-2i\beta_{n} D_{a}}\\
		iD_{a}\chi_n e^{2i\beta_{n} D_{a}}&0
	\end{pmatrix}.
  \end{equation}	
The $\sigma_n$-contributions can be computed directly from \Cref{lemma:Jproduct}, which gives
\begin{equation}\label{G11sigma}
	-\frac{\sigma_{n}}{2} J_{\beta_{n}}(D_{a})
	+\frac{\sigma_{n}}{2}
	\left(
	\sum_{j=1}^{L} W^{(-)}_{j}
	-\sum_{j=1}^{L-1} W^{(+)}_{j}
	\right). 
\end{equation}
where
\[
W^{(-)}_{j}=
\begin{pmatrix}
	e^{2i\beta_{n}(D_{s}-\Delta_{j-1}-s_{j})} & 0 \\[4pt]
	0 & e^{-2i\beta_{n}(D_{s}-\Delta_{j-1}-s_{j})}
\end{pmatrix},
\
W^{(+)}_{j}=
\begin{pmatrix}
	e^{2i\beta_{n}(D_{s}-\Delta_{j})} & 0 \\[4pt]
	0 & e^{-2i\beta_{n}(D_{s}-\Delta_{j})}
\end{pmatrix}.
\]
Next, by the multilinearity of the determinant in its rows, we have
	\begin{equation}\label{detM:first-order-split}
		\left.
		\frac{d}{dt}\det M_L^{(n)}(t)
		\right|_{t=0}
		=\det
		\begin{pmatrix}
			R_{1,1}^{(n)}\\
			R_{2L}G_{L,0}^{(n)}
		\end{pmatrix}
		\notag\\
		+\det
		\begin{pmatrix}
			R_{1,0}^{(n)}\\
			R_{2L}G_{L,1}^{(n)}
		\end{pmatrix}.
	\end{equation}
	Recalling that \(R_{2L}=(1,1)\) and \(H-2D_a=\Delta_D\), together with \cref{R11,matrix_G0}, the first determinant immediately yields
	\begin{equation}\label{detM:first-term}
		\det
		\begin{pmatrix}
			R_{1,1}^{(n)}\\
			R_{2L}G_{L,0}^{(n)}
		\end{pmatrix}=
		(2i-2\beta_nH)\chi_n e^{i\beta_n\Delta_D}+i\beta_n\sigma_n
		\left(
		e^{i\beta_n\Delta_D}-e^{-i\beta_n\Delta_D}
		\right).
	\end{equation}
	Similarly, applying \(H-2D_s = -\Delta_D\) together with \cref{R10,G11xi,G11sigma}, we obtain
	\begin{align}\label{detM:second-term}
		\det
		\begin{pmatrix}
			R_{1,0}^{(n)}\\
			R_{2L}G_{L,1}^{(n)}
		\end{pmatrix}
		&=\frac{\chi_{n}}{2}
		\det
		\begin{pmatrix}
			R^{(n)}_{1,0}\\
			R_{2L} \partial_\beta J_\beta(D_a)\big|_{\beta=\beta_n}
		\end{pmatrix}
		-\frac{\sigma_{n}}{2}
		\det
		\begin{pmatrix}
			R^{(n)}_{1,0}\\
			R_{2L}J_{\beta_{n}}(D_{a})
		\end{pmatrix}
		\nonumber \\
		&\qquad +\frac{\sigma_{n}}{2}
		\sum_{j=1}^{L}
		\det
		\begin{pmatrix}
			R^{(n)}_{1,0}\\
			R_{2L}W^{(-)}_{j}
		\end{pmatrix}
		-\frac{\sigma_{n}}{2}
		\sum_{j=1}^{L-1}
		\det
		\begin{pmatrix}
			R^{(n)}_{1,0}\\
			R_{2L}W^{(+)}_{j}
		\end{pmatrix}
		\nonumber \\[6pt]
		&=2\beta_{n}D_{a}\chi_{n}e^{i\beta_{n}\Delta_{D}}
		-i\beta_{n}\sigma_{n}e^{i\beta_{n}\Delta_{D}}
		\nonumber \\
		&\quad +i\beta_{n}\sigma_{n}
		\left(
		\sum_{j=1}^{L}
		e^{i\beta_{n}[-\Delta_{D}+2(\Delta_{j-1}+s_{j})]}
		-\sum_{j=1}^{L-1}
		e^{i\beta_{n}(-\Delta_{D}+2\Delta_{j})}
		\right)
		\nonumber \\[6pt]
		&=2\beta_{n}D_{a}\chi_{n}e^{i\beta_{n}\Delta_{D}}
		-i\beta_{n}\sigma_{n}e^{i\beta_{n}\Delta_{D}}
		\nonumber \\
		&\quad +i\beta_{n}\sigma_{n}e^{-i\beta_{n}\Delta_{D}}
		\left(
		\sum_{j=1}^{L}
		e^{2i\beta_{n}\Delta_{j-1}}\left(e^{2i\beta_{n}s_{j}}-1\right)
		+1
		\right).
	\end{align}
	Consequently,
	\begin{equation}\label{detM:derivative}
		\left.
		\frac{d}{dt}\det M_L^{(n)}(t)
		\right|_{t=0}
		=2i\beta_n e^{i\beta_n\Delta_D}
		\left[
		\chi_n\left(\frac1{\beta_n}+iD_s\right)+\sigma_nQ_{L,n}
		\right].
	\end{equation}
	Finally, since
	\[
	\det M_L^{(n)}(0)=2i\beta_n e^{i\beta_n\Delta_D}
	\]
	by \cref{detMperfect_final}, the Taylor expansion of $\det M_L^{(n)}(t)$ gives \cref{matrix_M1}.
	This completes the proof.
\end{proof}

The preceding lemma separates the two mechanisms responsible for the first-order
variation of the reduced determinant. The contribution involving $\chi_n$
depends only on the total slab thickness $D_s$, whereas the term involving
$\sigma_nQ_{L,n}$ retains information about the individual layer thicknesses
and their arrangement. We now turn to the reconstruction factor itself.

\begin{thm}\label{thm:first-order-reconstruction}
	Let the assumptions of \Cref{lem:first-order-det} hold.
	For each fixed Fourier mode $n$,
	the reconstruction factor admits
	the expansion
	\begin{equation}\label{expansion_Upsilon}
			\Upsilon_L^{(n)}(t)=
			\Upsilon_{L,\mathrm p}^{(n)}
			\left[
			1+Y_{L,n}t+O(t^2)
			\right],
	\end{equation}
	where
	\begin{equation}\label{Yn}
			Y_{L,n}=iD_s(\chi_0+\chi_n)+\sigma_0Q_{L,0}
			+\sigma_nQ_{L,n},
	\end{equation}
	and $\Upsilon_{L,\mathrm p}^{(n)}$ is the perfectly matched reconstruction
	factor given in \cref{Upsilonperfect}.
\end{thm}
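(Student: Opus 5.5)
The plan is to expand each of the three $t$-dependent factors in \cref{Upsilon2} to first order in $t$ and multiply the expansions. By \Cref{thm:reconstruction},
\[
\Upsilon_L^{(n)}(t)=\frac{\det M_L^{(0)}(t)\,\det M_L^{(n)}(t)}{4\rho\,\eta(t)\,\gamma_n(t)},
\]
and $\rho=-2ike^{-ikH}$ does not depend on $t$. \Cref{lem:first-order-det} requires only $\beta_n\neq0$, so it applies to mode $n$ and also to mode $0$, since $\beta_0=k\neq0$. It gives
\[
\det M_L^{(0)}(t)=2ik\,e^{ik\Delta_D}\Bigl[1+\bigl(\chi_0(\tfrac1k+iD_s)+\sigma_0Q_{L,0}\bigr)t+O(t^2)\Bigr],
\]
together with the analogous expansion \cref{matrix_M1} for $\det M_L^{(n)}(t)$.

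For the denominator I would use \cref{gamma_nt}, $\gamma_n(t)=\beta_n+\chi_nt+O(t^2)$. I would then note that $\eta=k\sqrt{\varepsilon_r\mu_r}=\gamma_0$. Indeed, $\varepsilon_r\mu_r=(1-t\varepsilon_1)(1-t\mu_1)$, so $\eta(t)=k-\tfrac{k(\varepsilon_1+\mu_1)}{2}t+O(t^2)=k+\chi_0t+O(t^2)$ by \cref{chi_n} with $\beta_0=k$. Consequently
\[
\frac{1}{\eta\gamma_n}=\frac{1}{k\beta_n}\Bigl[1-\Bigl(\frac{\chi_0}{k}+\frac{\chi_n}{\beta_n}\Bigr)t+O(t^2)\Bigr].
\]
One point needs care here: the branch of $\eta$, with non-negative imaginary part, must converge to $+k$ and not $-k$ as $t\to0$. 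This is consistent with the choice $\eta=k$, $\gamma_n=\beta_n$ made in \Cref{sec:PMP}, and for small $t$ the square root stays near $1$. I will state this consistency explicitly, since it is the only place where a sign could go wrong.

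Multiplying the three expansions, the zeroth-order term is
\[
\frac{(2ik e^{ik\Delta_D})(2i\beta_n e^{i\beta_n\Delta_D})}{4\rho k\beta_n}=-\frac{e^{i(k+\beta_n)\Delta_D}}{\rho}=\frac{e^{i\beta_n\Delta_D}e^{ik(H+\Delta_D)}}{2ik},
\]
which is exactly $\Upsilon_{L,\mathrm p}^{(n)}$ from \cref{Upsilonperfect}. The first-order coefficient is the sum of the three linear coefficients:
\[
\chi_0\Bigl(\tfrac1k+iD_s\Bigr)+\sigma_0Q_{L,0}+\chi_n\Bigl(\tfrac1{\beta_n}+iD_s\Bigr)+\sigma_nQ_{L,n}-\frac{\chi_0}{k}-\frac{\chi_n}{\beta_n}.
\]
The terms $\chi_0/k$ and $\chi_n/\beta_n$ cancel against the contribution of the denominator, which leaves precisely $Y_{L,n}$ in \cref{Yn}. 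All remainders are $O(t^2)$ because $n$ and $L$ are fixed and every quantity is analytic in $t$ near $0$.

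The main (mild) obstacle is therefore not the algebra but justifying that \Cref{lem:first-order-det} may be applied at $n=0$ and that the branch of $\eta(t)$ is the one continuous with $\eta(0)=k$. Once both points are checked, the statement follows from the product of the three first-order expansions.
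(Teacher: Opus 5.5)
Your proposal is correct and follows the paper's proof: you apply \Cref{lem:first-order-det} at modes $0$ and $n$, expand $1/(\eta\gamma_n)$ via \cref{gamma_nt} with $\eta=\gamma_0$, and cancel the $\chi_0/k$ and $\chi_n/\beta_n$ terms. One correction to your branch remark: under the convention $\operatorname{Im}\eta\ge0$, a lossy perturbation ($t\operatorname{Im}(\varepsilon_1+\mu_1)>0$) selects the root near $-k$, so ``the square root stays near $1$'' is not a valid justification. What actually justifies it is that $\gamma_n\mapsto-\gamma_n$ replaces $M_L^{(n)}$ by $M_L^{(n)}\bigl(\begin{smallmatrix}0&1\\1&0\end{smallmatrix}\bigr)$, which makes $\det M_L^{(n)}/\gamma_n$ and $\det M_L^{(0)}/\eta$ branch-independent, so you may use the branch continuous from $t=0$ throughout, as the lemma implicitly does.
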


\begin{proof}
	By \cref{gamma_nt}, the expansions of $\eta(t)=\gamma_0(t)$ and $\gamma_n(t)$
	give
	\begin{equation}\label{etagamma}
		\frac{1}{\eta(t)\gamma_n(t)} = \frac{1}{k\beta_n}
		\left[
		1-\left( \frac{\chi_0}{k} +\frac{\chi_n}{\beta_n}
		\right)t+O(t^2)
		\right].
	\end{equation}
	Substituting \cref{matrix_M1,etagamma} into the reconstruction formula \cref{Upsilon2}, we obtain
	\begin{align*}
		\Upsilon_L^{(n)}(t) =
		\Upsilon_{L,\mathrm p}^{(n)}
		\Bigg[
		1+\Bigg(
		&\chi_0\left(\frac1k+iD_s\right)
		+\chi_n\left(\frac1{\beta_n}+iD_s\right)\\
		&+\sigma_0Q_{L,0}
		+\sigma_nQ_{L,n}
		-\frac{\chi_0}{k}
		-\frac{\chi_n}{\beta_n}
		\Bigg)t
		+O(t^2)
		\Bigg].
	\end{align*}
	The terms involving $1/k$ and
	$1/\beta_n$ cancel, leaving
	\cref{expansion_Upsilon,Yn}.
\end{proof}

\subsection{Dependence on the Multilayer Geometry}
\label{subsec:perturbation-geometry}

\Cref{thm:first-order-reconstruction} characterizes the first-order sensitivity of the reconstruction factor to material perturbations: the coefficient \(Y_{L,n}\) gives the relative first-order variation of \(\Upsilon_L^{(n)}\) around its perfectly matched value. In \cref{Yn}, the first term depends only on the total slab thickness \(D_s\), whereas \(Q_{L,0}\) and \(Q_{L,n}\) depend on the individual layer thicknesses and cumulative thickness differences \(\Delta_j\).

To illustrate this, consider two configurations \(\mathrm A\) and \(\mathrm B\) with identical total thicknesses \(D_s\) and \(D_a\), but possibly different layer arrangements. Let \(Q_{\mathrm A,n}\) and \(Q_{\mathrm B,n}\) denote their respective geometric factors. By \cref{Yn},
\begin{equation}\label{Yn:geometry-difference}
	Y_{\mathrm A,n}-Y_{\mathrm B,n}=
	\sigma_0
	\bigl(Q_{\mathrm A,0}-Q_{\mathrm B,0}\bigr)+\sigma_n
	\bigl(Q_{\mathrm A,n}-Q_{\mathrm B,n}\bigr).
\end{equation}
Thus, while the reconstruction factor is independent of the layer arrangement under perfect matching, its first-order response to material perturbations is highly configuration-dependent. For instance, with a fixed total thickness and layer number, altering the thickness of individual layers (which changes \(\Delta_j\)) will distinctively affect the reconstruction factor.

A useful special case is when the permittivity and permeability undergo
identical perturbations,
\begin{equation}\label{symmetric:perturbation}
	\varepsilon_1=\mu_1=\zeta,
	\quad \zeta\in\mathbb C.
\end{equation}
Then \cref{chi_n,sigma_n} give
\[
\chi_0=-k\zeta,
\quad
\chi_n=-\frac{k^2}{\beta_n}\zeta,
\qquad
\sigma_0=0,
\quad
\sigma_n
=
\left(\frac{k^2}{\beta_n^2}-1\right)\zeta,
\]
and the first-order coefficient simplifies to
\begin{equation}\label{symmetric:Yn}
	Y_{L,n}=\zeta
	\left[
	-ikD_s\left(1+\frac{k}{\beta_n}\right)
	+
	\left(\frac{k^2}{\beta_n^2}-1\right)
	Q_{L,n}
	\right].
\end{equation}
For the zeroth mode, $\beta_0=k$, so
\begin{equation}\label{symmetric:Y0}
	Y_{L,0}=-2ikD_s\zeta.
\end{equation}
Hence, under identical permittivity and permeability perturbations, the
first-order correction to the zeroth-mode reconstruction factor is independent
of the layer arrangement, whereas for nonzero modes the factor $Q_{L,n}$
remains in \cref{symmetric:Yn}.

Finally, \cref{expansion_Upsilon} shows that, for fixed $L$ and $n$,
$\Upsilon_L^{(n)}(t)\to\Upsilon_{L,\mathrm p}^{(n)}$ as $t\to0$, so the
reconstruction factor depends continuously on sufficiently small perturbations
for each fixed mode, provided the modal problem remains nondegenerate. These
expansions are valid for fixed $n$ and $L$ and need not be uniform in the
spatial frequency; the behavior as $|n|\to\infty$ requires a separate analysis,
developed in the next section.

\section{High-Frequency Asymptotics and Resolution Enhancement}\label{sec:HFARE}

In this section, we investigate the high-frequency behavior of the reconstruction factor
$\Upsilon_L^{(n)}$ for imperfect material parameters. Our primary objective is to determine
the amplification of high spatial-frequency components and examine how the number of layers
affects reconstruction stability. For convenience, we set
\[
K_n=|\alpha_n|=\frac{2\pi|n|}{\Lambda}.
\]
Throughout this section, the material parameters, the number of layers $L$, and the layer
thicknesses are fixed, while $K_n\to\infty$. From the definitions of $\beta_n$ and
$\gamma_n$, we have
\begin{equation}\label{betagammaK}
	\beta_n=iK_n-\frac{ik^2}{2K_n}+O(K_n^{-3}), \quad
	\gamma_n=iK_n-\frac{i\eta^2}{2K_n}+O(K_n^{-3}).
\end{equation}
Consequently,
\begin{align}
	\gamma_{n}+\beta_{n} &= 2iK_{n}-\frac{i(k^{2}+\eta^{2})}{2K_{n}} + O(K^{-3}_{n}),\label{beta+gamma}\\
	\gamma_{n} - \beta_{n} &=\frac{i(k^{2}-\eta^{2})}{2K_{n}} + O(K^{-3}_{n}), \label{beta-gamma}
\end{align}
In particular, the matching parameter $\theta_n=\gamma_n/(\mu_r\beta_n)$ remains bounded
as $K_n\to\infty$. The same holds for the coefficients $p_n,q_n,\widehat p_n,\widehat q_n$
defined in \cref{pq,hatpq}.

\subsection{High-Frequency Asymptotics of the Reduced Determinant}
\label{subsec:high-frequency-det}

We first derive an asymptotic expression for $\det M_L^{(n)}$, which is the key quantity
in the reconstruction factor \cref{Upsilon2}. Recall the explicit expressions of the transfer
matrices $E_j^{(n)}$ and $F_j^{(n)}$ in \cref{eq:matrixE,eq:matrixF}. By
\cref{beta+gamma,beta-gamma}, their diagonal exponential factors satisfy
\[
e^{\pm i(\gamma_n-\beta_n)y}=1+O(K_n^{-1})
\]
for fixed $y$, whereas
\begin{equation}\label{exponential:high-frequency}
	\begin{aligned}
		e^{-i(\gamma_n+\beta_n)y}
		&=e^{2K_ny}\left[1+O(K_n^{-1})\right],\\
		e^{i(\gamma_n+\beta_n)y}
		&=e^{-2K_ny}\left[1+O(K_n^{-1})\right].
	\end{aligned}
\end{equation}
Thus, the transfer matrices contain both exponentially growing and exponentially decaying
components. Their contributions to the product $G_L^{(n)}$ depend on the ordering of the
interfaces.

Using these expansions, the leading parts of the transfer matrices are
\begin{align}
	E^{(n)}_{j} &\sim p_{n}I + q_{n}J_{K_{n}}(d_{2j}), \quad
	\text{for } j =1,\cdots,L, \label{EKn}\\
	F^{(n)}_{j} &\sim \hat{p}_{n}I + \hat{q}_{n}J_{K_{n}}(d_{2j+1}) , \quad
	\text{for } j =1,\cdots,L-1. \label{FKn}
\end{align}
Here the coefficients $p_n,q_n,\hat p_n,\hat q_n$ are kept unexpanded, since they only
affect the algebraic prefactors. The following theorem identifies the dominant contribution to the reduced determinant.

\begin{thm}\label{thm2}
	Let $L\geq1$ be fixed, and suppose that all slab and air thicknesses are strictly
	positive. Assume that the material parameters are fixed, with
	$\mu_r\neq0$, $\eta\neq0$, and $(\varepsilon_r,\mu_r)\neq(-1,-1)$.
	Then, as $K_n\to\infty$,
	\begin{equation}\label{detMK}
		\det M_L^{(n)}=
		-2\mu_rK_n\left( \frac{1+\theta_{n}}{2} \right)^{2L}\theta^{-(L-1)}_{n}
		e^{K_nH}\left[1+O(K_n^{-1})\right].
	\end{equation}
\end{thm}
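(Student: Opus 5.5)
The plan is to evaluate $\det M_L^{(n)}$ from the explicit formula \cref{eq:det:M}. The idea is to factor the exponentially large and small parts out of each transfer matrix, so that $G_L^{(n)}$ becomes a product of bounded matrices interleaved with explicit diagonal exponential matrices. Then we identify the single path through the product with the largest exponent. For real $y$ set $\Lambda(y)=\operatorname{diag}(e^{-K_ny},e^{K_ny})$ and
\[
P_n=\begin{pmatrix} p_n & q_n\\ q_n & p_n\end{pmatrix},\qquad \widehat P_n=\begin{pmatrix} \widehat p_n & \widehat q_n\\ \widehat q_n & \widehat p_n\end{pmatrix}.
\]
By \cref{eq:matrixE,eq:matrixF} together with \cref{beta+gamma,beta-gamma,exponential:high-frequency}, one has entrywise
\[
E_j^{(n)}=\Lambda(d_{2j})^{-1}\bigl(P_n\circ(1+O(K_n^{-1}))\bigr)\Lambda(d_{2j}),
\]
and the analogous identity holds for $F_j^{(n)}$ with $\widehat P_n$ and $d_{2j+1}$. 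Here $\circ$ denotes entrywise multiplication by factors $1+O(K_n^{-1})$. The point is that the diagonal entries are exactly $p_n e^{\pm i(\gamma_n-\beta_n)d}$, so we keep $p_n,\widehat p_n$ exact and never expand them. This matters when $\mu_r=-1$: then $p_n=O(K_n^{-2})$ may be small, but only multiplicative relative errors appear.

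Multiplying out \cref{eq:matrix_MG}, the inner conjugators telescope into $\Lambda(d_{2j})\Lambda(d_{2j-1})^{-1}=\operatorname{diag}(e^{K_ns_j},e^{-K_ns_j})$ and $\Lambda(d_{2j+1})\Lambda(d_{2j})^{-1}=\operatorname{diag}(e^{K_na_j},e^{-K_na_j})$. The outer factors are $\Lambda(a_L)^{-1}$ on the left and $\Lambda(d_2)$ on the right. Expanding each entry of $G_L^{(n)}$ as a sum over index paths $(i_0,\dots,i_{2L-1})\in\{1,2\}^{2L}$, every intermediate diagonal factor contributes $e^{\pm K_n\cdot(\text{thickness})}$, with the $+$ sign iff the path index there equals $1$.

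For $G_{L,11}^{(n)}$, the all-ones path gives the exponent
\[
a_L+\sum_{j=2}^{L}s_j+\sum_{j=1}^{L-1}a_j-d_2=D_a+D_s-s_1-(H-s_1)=0 ,
\]
with coefficient $p_n^L\widehat p_n^{\,L-1}$. Every other path, for any entry, changes at least one intermediate sign or an endpoint. Since all thicknesses are strictly positive, this lowers the exponent by at least $2K_n\min_j\{s_j,a_j\}$ relative to the maximal exponent available for that entry. The coefficients of these paths are bounded, because $p_n,q_n,\widehat p_n,\widehat q_n$ are bounded.

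For the other entries I will bound the largest exponents directly. One finds $|G_{L,21}^{(n)}|\lesssim e^{-2K_na_L}$. For the column relevant to $e^{i\gamma_nH}$, one has $|G_{L,12}^{(n)}|+|G_{L,22}^{(n)}|\lesssim e^{2K_nd_2}$.

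Insert these into \cref{eq:det:M}, using $e^{-i\gamma_nH}=e^{K_nH}(1+O(K_n^{-1}))$ and $e^{i\gamma_nH}=O(e^{-K_nH})$.
\begin{itemize}
\item The first term is $O\bigl(K_ne^{K_n(2d_2-H)}\bigr)=O\bigl(K_ne^{K_n(H-2s_1)}\bigr)$. This is exponentially smaller than $e^{K_nH}$.
\item The main term is $i(\gamma_n+\mu_r\beta_n)e^{K_nH}p_n^L\widehat p_n^{\,L-1}[1+O(K_n^{-1})]$.
\end{itemize}
Now $\gamma_n+\mu_r\beta_n=\mu_r\beta_n(1+\theta_n)$ and $\beta_n=iK_n(1+O(K_n^{-2}))$, so $i(\gamma_n+\mu_r\beta_n)=-2\mu_rK_np_n(1+O(K_n^{-2}))$. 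Also $\widehat p_n=p_n/\theta_n$. Hence the main term equals $-2\mu_rK_n p_n^{2L}\theta_n^{-(L-1)}e^{K_nH}$, which is \cref{detMK}.

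The main obstacle is that the competing contributions have to be genuinely negligible relative to the main term, which carries the possibly small prefactor $p_n^{2L}$. In the delicate case $\mu_r=-1$, $\varepsilon_r\neq-1$, one has $p_n\sim ck^2(\varepsilon_r\mu_r-1)/K_n^2\neq0$ by \cref{betagammaK}. This is where the hypothesis $(\varepsilon_r,\mu_r)\neq(-1,-1)$ is used. Since $p_n$ then decays only polynomially while all competing paths are suppressed by $e^{-2K_n\min\{s_j,a_j\}}$, they are absorbed into $O(K_n^{-1})$. I would verify the path exponent bookkeeping carefully, especially the claim that the all-ones path is the unique maximizer, since this is where positivity of every thickness enters.
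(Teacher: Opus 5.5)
Your proposal is correct and follows essentially the same route as the paper. Conjugating out $\Lambda(d)$ and summing over index paths is a reorganization of the paper's expansion $E_j^{(n)}\sim p_nI+q_nJ$, $F_j^{(n)}\sim \widehat p_nI+\widehat q_nJ$; your all-ones path is its all-identity term $p_n^L\widehat p_n^{\,L-1}$ in $G_{L,11}^{(n)}$, and you then make the same comparison of the $e^{\pm K_nH}$ columns of $\det M_L^{(n)}$ and use the same identity $\widehat p_n=p_n/\theta_n$.

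Your bookkeeping is, if anything, more careful than the paper's. The bounds $|G_{L,21}^{(n)}|\lesssim e^{-2K_na_L}$ and $|G_{L,12}^{(n)}|+|G_{L,22}^{(n)}|\lesssim e^{2K_nd_2}$ are correct as stated; the paper's intermediate exponent claims for these entries are less accurate, though its conclusion is unaffected. You also explicitly check that the exponentially suppressed competing terms still dominate the possibly polynomially small prefactor $p_n^{2L}$ when $\mu_r=-1$, which the paper leaves implicit.
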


\begin{proof}
	Recall from \cref{eq:det:M} that
	\begin{equation*}
		\det(M^{(n)}_{L}) = i(\gamma_{n} - \mu_{r}\beta_{n})e^{i \gamma_{n}H}
		\left( G^{(n)}_{L,12} + G^{(n)}_{L,22} \right) +
		i(\gamma_{n} + \mu_{r}\beta_{n})e^{-i \gamma_{n}H}
		\left( G^{(n)}_{L,11} + G^{(n)}_{L,21} \right).
	\end{equation*}
	It follows from \eqref{pq} and \eqref{betagammaK} that
	\[
	\begin{gathered}
		\gamma_n+\mu_r\beta_n = 2\mu_r\beta_n p_n, \qquad
		\gamma_n-\mu_r\beta_n = -2\mu_r\beta_n q_n, \\[4pt]
		e^{i\gamma_n H} = e^{-K_n H}\bigl[1+O(K_n^{-1})\bigr], \qquad
		e^{-i\gamma_n H} = e^{K_n H}\bigl[1+O(K_n^{-1})\bigr].
	\end{gathered}
	\]
	Combining these with \eqref{betagammaK}, we obtain the leading-order part
	\[
	\det(M^{(n)}_{L}) \sim 2\mu_{r}K_{n}q_{n}e^{-K_{n}H}
	\left( G^{(n)}_{L,12} + G^{(n)}_{L,22} \right) -
	2\mu_{r}K_{n}p_{n}e^{K_{n}H}
	\left( G^{(n)}_{L,11} + G^{(n)}_{L,21} \right),
	\]
	with the remaining terms smaller by a factor $O(K^{-1}_{n})$. Using
	\eqref{EKn} and \eqref{FKn}, we now extract the leading parts of the four
	entries of $G^{(n)}$. By \eqref{eq:matrix_MG} and \Cref{lemma:Jproduct}, the diagonal entries
	$G^{(n)}_{L,11}$ and $G^{(n)}_{L,22}$ are nonzero only when an even number of
	factors $J_{K_{n}}(d_{j})$ occur. For any nonnegative even integer $N<2L-1$,
	\[
	G^{(n)}_{L,11} \sim e^{2K_{n}
		\left( \sum_{i=1}^{N/2}(d_{j_{2i}}-d_{j_{2i-1}}) \right) }, \quad
	G^{(n)}_{L,22} \sim e^{-2K_{n}
		\left( \sum_{i=1}^{N/2}(d_{j_{2i}}-d_{j_{2i-1}}) \right) },
	\]
	where $d_{j_{2i-1}}>d_{j_{2i}}$ for every $i=1,\cdots,N/2$. Hence
	$G^{(n)}_{L,11}$ is maximal when $N=0$, while $G^{(n)}_{L,22}$ is maximal
	when $N=2L-2$. Combining these observations with \eqref{EKn} and \eqref{FKn}
	gives
	\begin{equation} \label{G11G22}
		G^{(n)}_{L,11} \sim p^{L}_{n}\hat{p}^{L-1}_{n}, \quad
		G^{(n)}_{L,22} \sim p_{n}q^{L-1}_{n}\hat{q}^{L-1}_{n}e^{2K_{n}X},
	\end{equation}
	where
	\[
	X=\max\left\{
	a_1+\sum_{j=2}^{L}s_j,\;
	a_1+a_2+\sum_{j=3}^{L}s_j,\;
	\dots,\;
	\sum_{j=1}^{L-1}a_j+s_L
	\right\},
	\]
	and
	\begin{equation}\label{X00}
		X < \sum_{j=1}^{L-1}a_{j} + \sum_{j=1}^{L}s_{j} = H-a_{L}.
	\end{equation}
	For $G^{(n)}_{L,12}$ and $G^{(n)}_{L,21}$, again by \Cref{lemma:Jproduct},
	these entries are nonzero only when an odd number of factors
	$J_{K_{n}}(d_{j})$ occur. Suppose that $N+1$ such factors occur. Then
	\[
	G^{(n)}_{L,12} \sim e^{2K_{n}
		\left(d_{j_{0}}+ \sum_{i=1}^{N/2}(d_{j_{2i}}-d_{j_{2i-1}}) \right) }, \quad
	G^{(n)}_{L,21} \sim e^{-2K_{n}
		\left(d_{j_{0}}+ \sum_{i=1}^{N/2}(d_{j_{2i}}-d_{j_{2i-1}}) \right) },
	\]
	where $d_{j_{0}}>d_{j_{2i-1}}>d_{j_{2i}}$ for every $i=1,\cdots,N/2$.
	The leading part of $G^{(n)}_{L,12}$ is obtained by taking $d_{j_{0}}$
	maximal, i.e.\ $d_{j_{0}}=d_{2}$. Setting
	$h= \min \{d_{j}-d_{j+1}\}$, $j=3,\cdots,2L-1$, we obtain
	\begin{equation}\label{G12}
		G^{(n)}_{L,12} \sim q^{2}_{n}\hat{q}_{n}p^{L-2}_{n}\hat{p}^{L-2}_{n}e^{2K_{n}(d_{2}-h)}.
	\end{equation}
	Moreover, since
	$d_{j_{0}}+ \sum_{i=1}^{N/2}(d_{j_{2i}}-d_{j_{2i-1}})>0$ for all admissible
	$j_{0}$ and $j_{i}$, we have
	\begin{equation}\label{G11+G21}
		e^{K_{n}H}(G^{(n)}_{L,11} + G^{(n)}_{L,21})
		\sim p^{L}_{n}\hat{p}^{L-1}_{n}e^{K_{n}H}.
	\end{equation}
	Similarly, \eqref{G11G22} and \eqref{G12} yield
	\begin{equation*}
		e^{-K_{n}H}(G^{(n)}_{L,12} + G^{(n)}_{L,22})
		\sim q^{2}_{n}\hat{q}_{n}p^{L-2}_{n}\hat{p}^{L-2}_{n}e^{2K_{n}(d_{2}-h-H)} + p_{n}q^{L}_{n}\hat{q}^{L-1}_{n}e^{K_{n}(2X-H)}.
	\end{equation*}
	By \eqref{X00} and the definition of $h$,
\[
		2X-H < H-a_{L}<H,\quad d_{2}-h-H <0.
\]
	Comparing this with \eqref{G11+G21}, we conclude that the leading part of
	$\det(M^{(n)}_{L})$ is dominated by
	$e^{K_{n}H}\left( G^{(n)}_{L,11} + G^{(n)}_{L,21} \right)$, that is,
	\begin{equation}
		\det(M^{(n)}_{L}) \sim -2\mu_{r}K_{n}p^{L+1}_{n}\hat{p}^{L-1}_{n}e^{K_{n}H}.
	\end{equation}
	Finally, from the definitions of $p_{n}$ and $\hat{p}_{n}$ in \eqref{pq}
	and \eqref{hatpq}, we have $\hat{p}=p_{n}/\theta_{n}$, and hence
	\begin{equation}\label{pnhatpn}
		p^{L+1}_{n}\hat{p}^{L-1} = p^{2L}_{n}\theta^{-(L-1)}_{n} =
		\left( \frac{1+\theta_{n}}{2} \right)^{2L}\theta^{-(L-1)}_{n}.
	\end{equation}
	Substituting \eqref{pnhatpn} into the coefficient above gives \eqref{detMK},
	and the proof is complete.
\end{proof}

\subsection{High-Frequency Behavior of the Reconstruction Factor}
\label{subsec:high-frequency-reconstruction}

We now use \Cref{thm2} to determine the asymptotic behavior of the reconstruction
factor. An important distinction arises between perturbations of the permeability
and those of the permittivity alone.

\begin{thm}\label{thm3}
	Under the assumptions of \Cref{thm2}, suppose additionally that
	$\det M_L^{(0)}\neq0$. Then, as $K_n\to\infty$, the reconstruction factor satisfies
	\begin{equation}\label{UpsilonK}
			\Upsilon_L^{(n)}=
			\begin{cases}
				\displaystyle
				\frac{i\det M_L^{(0)}}{2\rho\eta}
				\left(
				\frac{(\mu_r+1)^2}{4\mu_r}
				\right)^L
				e^{K_nH}
				\left[1+O(K_n^{-1})\right],
				&\mu_r\neq-1,\\[4mm]
				\displaystyle
				\frac{(-1)^Li\det M_L^{(0)}}{2\rho\eta}
				\left(
				\frac{k^2(1+\varepsilon_r)}{4K_n^2}
				\right)^{2L}
				e^{K_nH}
				\left[1+O(K_n^{-1})\right],
				&\mu_r=-1,\ \varepsilon_r\neq-1. 
			\end{cases}
	\end{equation}
\end{thm}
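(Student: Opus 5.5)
The plan is to substitute the asymptotic formula \cref{detMK} of \Cref{thm2} directly into the definition \cref{Upsilon2}:
\[
\Upsilon_L^{(n)}=\frac{\det M_L^{(0)}\det M_L^{(n)}}{4\rho\eta\gamma_n}.
\]
After that, the only work is to compute the large-$K_n$ behaviour of the algebraic prefactor $\theta_n^{-(L-1)}\bigl((1+\theta_n)/2\bigr)^{2L}/\gamma_n$. By \cref{betagammaK}, $\gamma_n=iK_n[1+O(K_n^{-2})]$. Hence the factor $-2\mu_rK_n/(4\gamma_n)$ becomes $i\mu_r/2\,[1+O(K_n^{-2})]$, which gives
\[
\Upsilon_L^{(n)}=\frac{i\mu_r\det M_L^{(0)}}{2\rho\eta}\left(\frac{1+\theta_n}{2}\right)^{2L}\theta_n^{-(L-1)}e^{K_nH}\bigl[1+O(K_n^{-1})\bigr].
\]

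Next I expand $\theta_n=\gamma_n/(\mu_r\beta_n)$. From \cref{betagammaK},
\[
\frac{\gamma_n}{\beta_n}=1+\frac{k^2-\eta^2}{2K_n^2}+O(K_n^{-4}),
\]
so $\theta_n=\mu_r^{-1}\bigl[1+\frac{k^2-\eta^2}{2K_n^2}+O(K_n^{-4})\bigr]$. The two cases then separate.

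\emph{Case $\mu_r\neq-1$.} Here $1+\theta_n\to 1+\mu_r^{-1}\neq0$, so replacing $\theta_n$ by $\mu_r^{-1}$ costs only a relative error $O(K_n^{-2})$. The prefactor becomes
\[
\mu_r\Bigl(\tfrac{1+\mu_r}{2\mu_r}\Bigr)^{2L}\mu_r^{L-1}=\Bigl(\tfrac{(\mu_r+1)^2}{4\mu_r}\Bigr)^L,
\]
which is the first line of \cref{UpsilonK}.

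\emph{Case $\mu_r=-1$, $\varepsilon_r\neq-1$.} Now $\eta^2=-k^2\varepsilon_r$, and
\[
1+\theta_n=-\frac{k^2-\eta^2}{2K_n^2}+O(K_n^{-4})=-\frac{k^2(1+\varepsilon_r)}{2K_n^2}\bigl[1+O(K_n^{-2})\bigr],
\]
so $\bigl((1+\theta_n)/2\bigr)^{2L}=\bigl(k^2(1+\varepsilon_r)/(4K_n^2)\bigr)^{2L}[1+O(K_n^{-2})]$. The factor $\mu_r\theta_n^{-(L-1)}=(-1)(-1)^{L-1}[1+O(K_n^{-2})]=(-1)^L[1+O(K_n^{-2})]$, which gives the second line.

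The main obstacle is this degenerate case. There the leading coefficient in \cref{detMK} is itself $O(K_n^{-4L})$, so I must check that the $O(K_n^{-1})$ remainder in \Cref{thm2} is \emph{relative} to that vanishing coefficient, and not absolute relative to $K_ne^{K_nH}$. Otherwise terms discarded in the proof of \Cref{thm2} could dominate. I would verify this by revisiting the proof of \Cref{thm2}:

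\begin{itemize}
\item The neglected contributions (the $G_{L,12},G_{L,22}$ terms and the non-maximal products in $G_{L,11},G_{L,21}$) are exponentially smaller, by factors like $e^{-2K_n a_L}$ or $e^{-2K_nh}$. They therefore stay negligible even against polynomially small coefficients.
\item The algebraic corrections to the dominant product come from $e^{\pm i(\gamma_n-\beta_n)y}=1+O(K_n^{-1})$ and from the exponentials \cref{exponential:high-frequency}. These multiply the same $p_n^{L+1}\hat p_n^{L-1}$ monomial, so they are genuinely relative.
\end{itemize}

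With that checked, the error stays $[1+O(K_n^{-1})]$ in both cases, completing the proof.
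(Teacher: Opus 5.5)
Your proposal is correct and follows the paper's own proof: substitute \cref{detMK} into \cref{Upsilon2}, use $\gamma_n=iK_n[1+O(K_n^{-2})]$ to obtain the prefactor $i\mu_r/2$, and then expand $\theta_n=\mu_r^{-1}[1+(k^2-\eta^2)/(2K_n^2)+O(K_n^{-4})]$ separately in the cases $\mu_r\neq-1$ and $\mu_r=-1$, $\varepsilon_r\neq-1$. Two of your details go slightly beyond the paper. Your sign $\mu_r\theta_n^{-(L-1)}=(-1)^L$ matches the statement, whereas the paper's intermediate line writes $(-1)^{L-1}$, which is a slip. You also check that the remainder in \Cref{thm2} is relative to the $O(K_n^{-4L})$ leading coefficient when $\mu_r=-1$, because every discarded term is exponentially smaller; the paper leaves this implicit.
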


\begin{proof}
	Substituting \cref{detMK} into the reconstruction formula \cref{Upsilon2},
	and using $\gamma_n=iK_n[1+O(K_n^{-2})]$,
	we obtain
	\begin{equation}\label{UpsilonKK}
		\Upsilon_L^{(n)}=
		\frac{i\mu_r\det M_L^{(0)}}{2\rho\eta}
		\left( \frac{1+\theta_{n}}{2} \right)^{2L}\theta^{-(L-1)}_{n}
		e^{K_nH}
		\left[1+O(K_n^{-1})\right].
	\end{equation}
	By \cref{betagammaK}, the matching parameter has the expansion
	\begin{equation}\label{thetak}
		\theta_n=\frac{1}{\mu_r}
		\left[
		1+\frac{k^2-\eta^2}{2K_n^2} +O(K_n^{-4})
		\right],\quad
		\theta_{n}^{-1} = \mu_{r}
		\left[ 
		1+ \frac{(\eta^{2}-k^{2})}{2K^{2}_{n}} + O(K^{-4}_{n})
		\right].
	\end{equation}
	If $\mu_r\neq-1$, then
	\[
	\mu_{r}\left( \frac{1+\theta_{n}}{2} \right)^{2L}\theta^{-(L-1)}_{n}
	=\left(\frac{(\mu_r+1)^2}{4\mu_r}\right)^L
	\left[ 1+O(K^{-2}_{n})\right].
	\]
	Substitution into \cref{UpsilonKK} gives the first case of \cref{UpsilonK}.
	We next consider $\mu_r=-1$ and $\varepsilon_r\neq-1$. Since $\eta^2=-k^2\varepsilon_r$,
	\cref{thetak} becomes
	\[
	\theta_n= -1-\frac{k^2(1+\varepsilon_r)}{2K_n^2}+O(K_n^{-4}).
	\]
	It follows that
	\[
	\mu_{r}\left( \frac{1+\theta_{n}}{2} \right)^{2L}\theta^{-(L-1)}_{n} 
	=(-1)^{L-1}\left(
	\frac{k^{2}(1+\epsilon_{r})}{4K^{2}_{n}}
	\right)^{2L}\left[ 1+O(K^{-2}_{n})\right].
	\]
	The second case of \cref{UpsilonK}
	now follows from \cref{UpsilonKK}.
\end{proof}

\Cref{thm3} reveals a fundamental difference between perfect and imperfect
material parameters. As shown in \Cref{thm:first-order-reconstruction}, for each
fixed Fourier mode the reconstruction factor depends continuously on small
perturbations of the material parameters. However, \Cref{thm3} shows that any
fixed nonzero deviation from perfect matching eventually amplifies sufficiently
high spatial frequencies exponentially. More precisely, when $\mu_r\neq-1$ the
reconstruction factor grows as $e^{K_nH}$ times a coefficient independent of $n$,
while for $\mu_r=-1$ but $\varepsilon_r\neq-1$ an additional algebraic factor
$K_n^{-4L}$ appears, which cannot overcome the exponential growth. In both cases,
\begin{equation}\label{high-frequency:rate}
	\lim_{|n|\to\infty}
	\frac{\log|\Upsilon_L^{(n)}|}{K_n}
	=H,
\end{equation}
in contrast with \Cref{theorem:perfect}, where the reconstruction factor is
uniformly bounded under perfect matching with $D_s\geq D_a$. Thus, in this
regime the perfect-matching limit and the high-frequency limit do not commute.

\subsection{Effect of the Number of Layers}
\label{subsec:layer-number}

Although imperfect matching inevitably produces exponential high-frequency
amplification, \Cref{thm3} also shows that the number of layers can significantly
influence its leading coefficient. To examine this effect, we consider
small material deviations
\begin{equation}\label{material:deviations}
	\mu_r=-1+\zeta_\mu,
	\quad
	\varepsilon_r=-1+\zeta_\varepsilon,
\end{equation}
where
\[
\zeta_*=\max\{|\zeta_\mu|,|\zeta_\varepsilon|\} \ll 1.
\]
We introduce the normalized zeroth-mode
factor
\begin{equation}\label{N_L}
	\mathcal N_L =
	\frac{|\det M_L^{(0)}|}{2|\eta|}.
\end{equation}
By \Cref{lem:first-order-det}, the zeroth-mode determinant admits the expansion
\begin{equation}\label{detM0:perturbation}
	\det M_L^{(0)}=
	2ik e^{ik\Delta_D}
	\Bigg[
	1-\frac{1+ikD_s}{2}(\zeta_\mu+\zeta_\varepsilon)+
	\frac{\zeta_\varepsilon-\zeta_\mu}{2} Q_{L,0}+O_L(\zeta_*^2)
	\Bigg].
\end{equation}
Here $O_L(\zeta_*^2)$ indicates that the implied constant may depend on
the fixed layer configuration. Since
\[
\eta=k\left[
1-\frac{\zeta_\mu+\zeta_\varepsilon}{2} +O(\zeta_*^2)
\right],
\]
we obtain
\begin{equation}\label{N_L:expansion}
	\mathcal N_L=1+\frac{kD_s}{2}\operatorname{Im}
	(\zeta_\mu+\zeta_\varepsilon)+
	\frac12\operatorname{Re}
	\left[
	(\zeta_\varepsilon-\zeta_\mu)Q_{L,0}
	\right]+O_L(\zeta_*^2).
\end{equation}
Furthermore, from the definition of
$Q_{L,0}$ in \cref{Q_Ln}, we have
\begin{equation}\label{Q0:bound}
	|Q_{L,0}|\leq
	\frac12\sum_{j=1}^{L}
	\left|e^{2iks_j}-1\right|
	\leq k\sum_{j=1}^{L}s_j=kD_s.
\end{equation}

Consequently, for each fixed finite layer configuration,
the normalized zeroth-mode factor satisfies
\begin{equation}\label{N_L:estimate}
	\mathcal N_L
	=\frac{|\det M_L^{(0)}|}{2|\eta|}
	=1+O_L(\zeta_*).
\end{equation}
Thus, $\mathcal N_L$ remains close to unity for
sufficiently small material perturbations.

Combining this estimate with \cref{UpsilonK},
we obtain
\begin{equation}\label{UpsilonKKK}
	|\Upsilon_L^{(n)}|=
	\begin{cases}
		\displaystyle
		\frac{\mathcal N_L}{|\rho|}
		\left(
		\frac{|\zeta_\mu|^2}{4|1-\zeta_\mu|}
		\right)^L
		e^{K_nH}\left[1+O(K_n^{-1})\right],
		&\zeta_\mu\neq0,\\[4mm]
		\displaystyle
		\frac{\mathcal N_L}{|\rho|}
		\left(
		\frac{k^4|\zeta_\varepsilon|^2}{16K_n^4}
		\right)^L
		e^{K_nH}\left[1+O(K_n^{-1})\right],
		&\zeta_\mu=0,\quad\zeta_\varepsilon\neq0.
	\end{cases}
\end{equation}

For sufficiently small permeability deviations,
the factor $|\zeta_\mu|^2/(4|1-\zeta_\mu|)$
is less than unity, so increasing the finite
number of layers reduces its contribution to
the leading high-frequency amplification coefficient.
When only the permittivity is perturbed,
the coefficient exhibits an additional algebraic
decay of order $K_n^{-4L}$.
Since $\mathcal N_L$ remains close to unity for
each fixed finite configuration, these results
indicate that increasing the number of layers,
while keeping $D_s$ and $D_a$ fixed, can suppress
high-frequency error amplification and improve
reconstruction stability for sufficiently small
material deviations. Nevertheless, the exponential
factor $e^{K_nH}$ persists for any fixed finite
number of layers under imperfect matching.

\section{Numerical Experiments}\label{sec:NE}

In this section, numerical experiments are presented to validate our theoretical findings and demonstrate the advantages of multilayer superlenses. To avoid inverse crime, the synthetic scattering data are generated by an independent spectral method.

The reconstructions are computed using the truncated Fourier series
\begin{equation}\label{numerical:reconstruction}
	f_N^\sigma(x) =
	\sum_{|n|\leq N}
	\Upsilon_L^{(n)}
	\left[
	(u^\sigma)^{(n)}(H)-u_0^{(n)}(H)
	\right]e^{i\alpha_n x},
\end{equation}
with the truncation frequency \(N=10\). The noisy measurement data is modeled as
\begin{equation}\label{noise:model}
	u^\sigma(x,H)
	=u(x,H)+\sigma\max_{x\in[0,\Lambda]}|u(x,H)|
	\bigl(\xi_R(x)+i\xi_I(x)\bigr),
\end{equation}
where \(\xi_R\) and \(\xi_I\) are independent standard Gaussian random variables, and \(\sigma\) is the prescribed noise level.
Unless otherwise specified, we set \(k=1\), \(\Lambda=1\), and \(H=0.2\). The slab and air layers have uniform thicknesses \(s_j = D_s/L\) and \(a_j = D_a/L\), respectively.

\subsection{Perfectly Matched Parameters}
\label{subsec:numerical-perfect}

We first examine the perfectly matched case, \(\varepsilon_r=\mu_r=-1\), under three different total thickness distributions: \(D_s=0.3H, 0.5H, 0.7H\), with \(D_a=H-D_s\). 

\Cref{fig:perfect-psilon} illustrates the behavior of the reconstruction factors. As predicted by \Cref{theorem:perfect}, the factor exhibits exponential growth when \(D_s < D_a\), remains exactly constant at \(\frac{1}{2k}\) (which equals \(0.5\) in this case) when \(D_s = D_a\), and decays exponentially when \(D_s > D_a\). Crucially, the curves for \(L=1, 3, 5\) overlap completely, explicitly demonstrating that under perfect matching, the factor is entirely independent of the layer number.

\begin{figure}[htbp]
	\centering
	\includegraphics[width=0.5\textwidth]{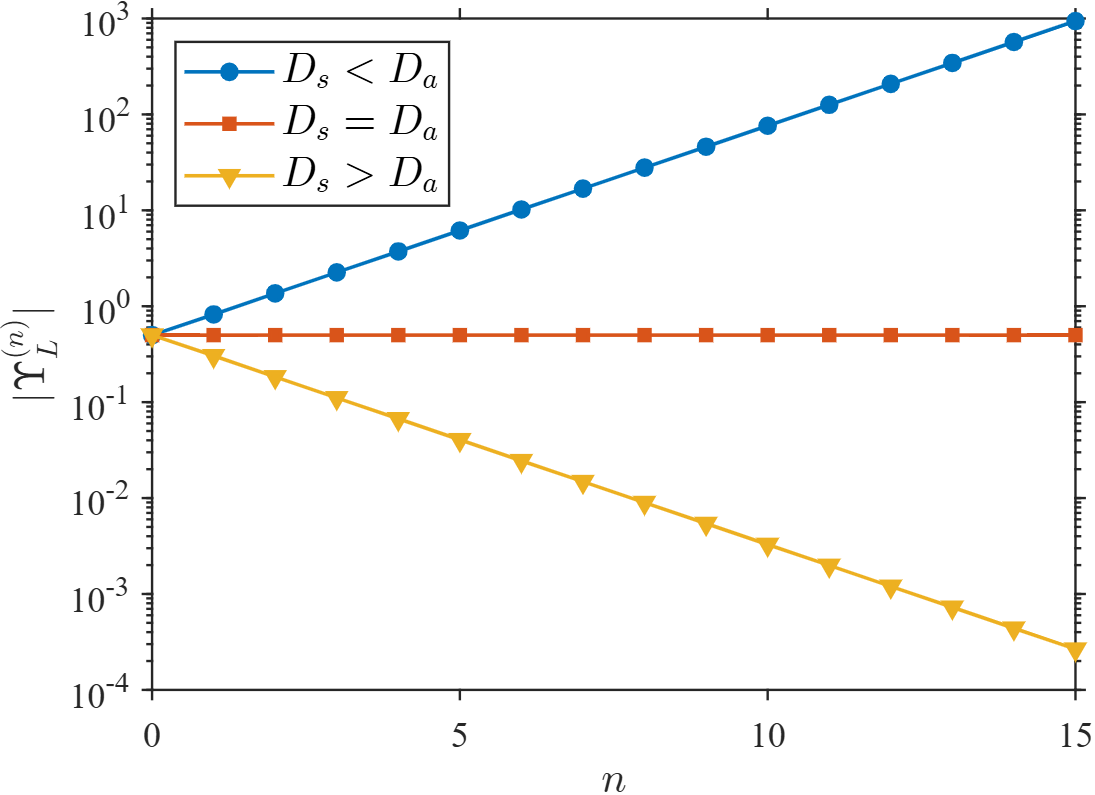}
	\caption{Magnitudes of the reconstruction factors for perfectly matched parameters. The factors are independent of \(L\) and determined solely by the total thicknesses.}
	\label{fig:perfect-psilon}
\end{figure}

We then reconstruct a smooth periodic surface \(f(x)=\delta g(x)\), where 
\begin{equation}\label{numerical:profile}
	g(x)=0.5\cos(2\pi x)+0.4\cos(6\pi x)+0.3\cos(8\pi x)+0.2\cos(20\pi x),
\end{equation}
with an amplitude of \(\delta=0.01\) and a noise level of \(\sigma=5\%\). 

\begin{figure}[htbp]
	\centering
	\includegraphics[width=0.8\textwidth]{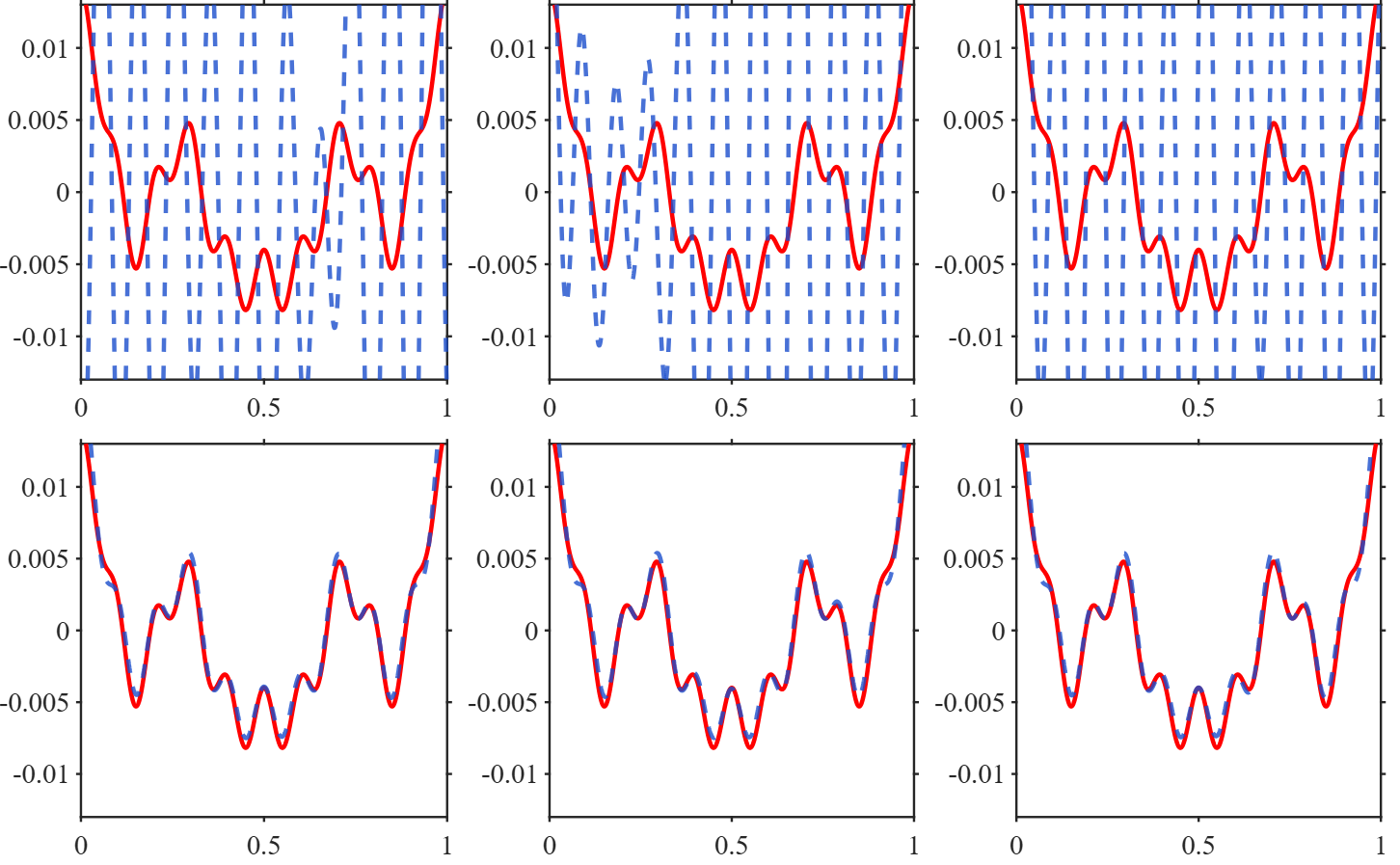}
	\caption{Reconstructions of the smooth surface under perfectly matched parameters. Top row: \(D_s<D_a\); Bottom row: \(D_s=D_a\). Columns(from left to right) correspond to \(L=1,3,5\).}
	\label{fig:perfect-reconstruction}
\end{figure}

\Cref{fig:perfect-reconstruction} reveals a striking contrast: when \(D_s<D_a\), the reconstructions are highly unstable and corrupted by high-frequency noise, regardless of the layer number \(L\). In contrast, when \(D_s=D_a\), the reconstructed profiles accurately match the true surface for all tested \(L\). This validates that, under perfect matching, the reconstruction stability depends exclusively on the total thicknesses rather than the internal layer arrangement.

\subsection{Imperfectly Matched Parameters}
\label{subsec:numerical-imperfect}

We now investigate the effect of increasing the number of layers under imperfect matching. Throughout this subsection, we fix \(D_s=D_a=H/2\) and consider three material perturbation cases:
\begin{equation}\label{numerical:material-cases}
	\begin{aligned}
		\text{Case 1:}\quad & \varepsilon_r=-1+0.1i, & \mu_r&=-1,\\
		\text{Case 2:}\quad & \varepsilon_r=-1, & \mu_r&=-1+0.1i,\\
		\text{Case 3:}\quad & \varepsilon_r=-1+0.1i, & \mu_r&=-1+0.1i.
	\end{aligned}
\end{equation}

\Cref{fig:imperfect-psilon} displays the corresponding magnitudes of the reconstruction factors for \(L=1,3,5\). In Case 1 (permittivity loss only), the factor remains closely bounded, indicating that the reconstruction is relatively insensitive to the loss in \(\varepsilon_r\). In sharp contrast, Cases 2 and 3 show that under the same perturbation magnitude, a loss in \(\mu_r\) causes the factor to grow rapidly with the Fourier mode, revealing a much higher sensitivity to permeability mismatch. Nevertheless, this exponential amplification is substantially suppressed as the number of layers \(L\) increases from 1 to 5. This is consistent with \Cref{thm3}, demonstrating that multilayer structures can effectively mitigate the severe noise amplification in high-frequency modes.

\begin{figure}[htbp]
	\centering
	\includegraphics[width=1\textwidth]{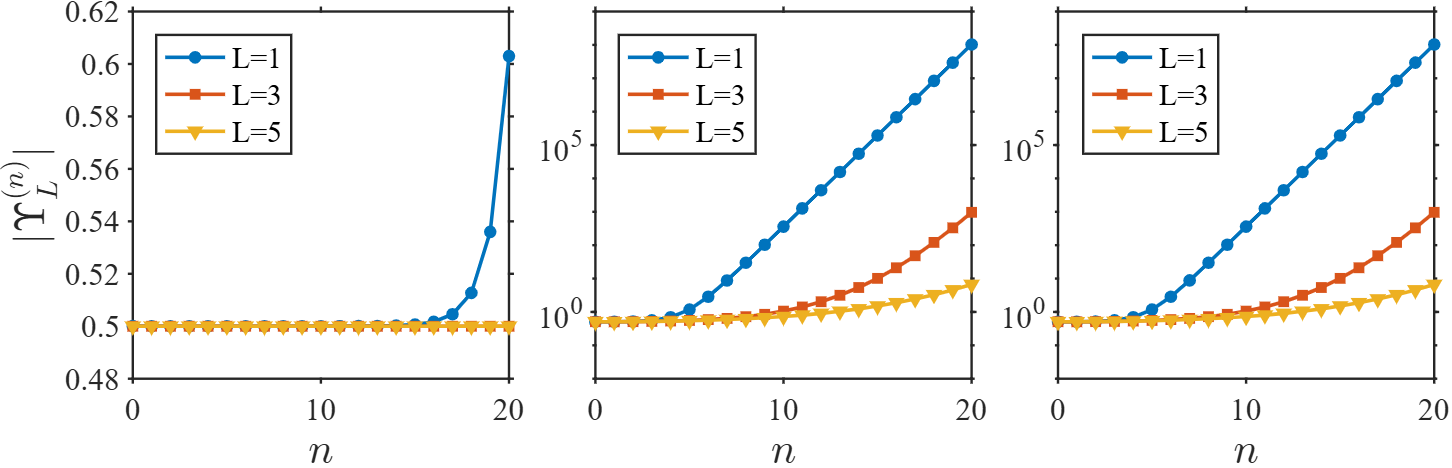}
	\caption{Magnitudes of the reconstruction factors for \(L=1,3,5\) under imperfect matching. From left to right: Cases 1--3 with \(H=0.2\).}
	\label{fig:imperfect-psilon}
\end{figure}
We then reconstruct the smooth surface from \cref{numerical:profile} with \(\delta=0.01\) and \(5\%\) noise. As illustrated in \cref{fig:imperfect-reconstruction}, Case 1 yields accurate reconstructions for all layer numbers, consistent with its bounded factor shown in the frequency domain. Conversely, for Cases 2 and 3, the single-layer (\(L=1\)) reconstruction is severely corrupted by noise due to the massive high-frequency amplification, whereas the multilayer configurations (\(L=3,5\)) successfully recover the surface profile. This spatial-domain result directly corroborates the stability improvement provided by additional layers.
\begin{figure}[htbp]
	\centering
	\includegraphics[width=0.8\textwidth]{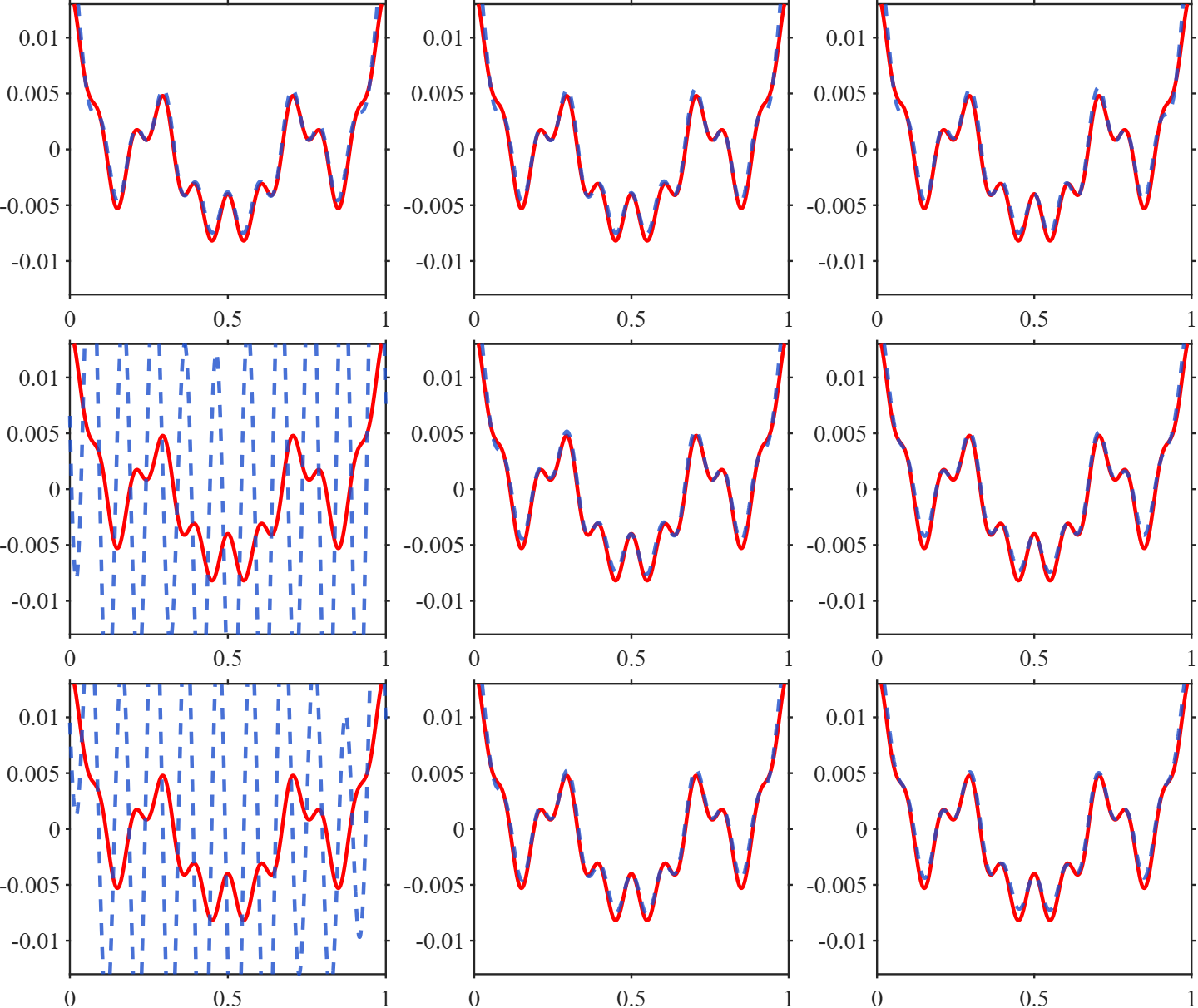}
	\caption{Reconstruction of the smooth periodic surface under imperfect matching with \(5\%\) noise. Rows(from top to bottom): Cases 1--3; Columns(from left to right): \(L=1,3,5\).}
	\label{fig:imperfect-reconstruction}
\end{figure}

To further assess noise robustness, we increase the noise level to \(10\%\) and \(20\%\) for Case 3 under the standard thickness (\(H=0.2\)). As shown in \cref{fig:noise-H02}, both \(L=3\) and \(L=5\) retain highly accurate profiles even at the extreme \(20\%\) noise level, demonstrating the strong noise robustness of the multilayer structures.
\begin{figure}[htbp]
	\centering
	\includegraphics[width=0.6\textwidth]{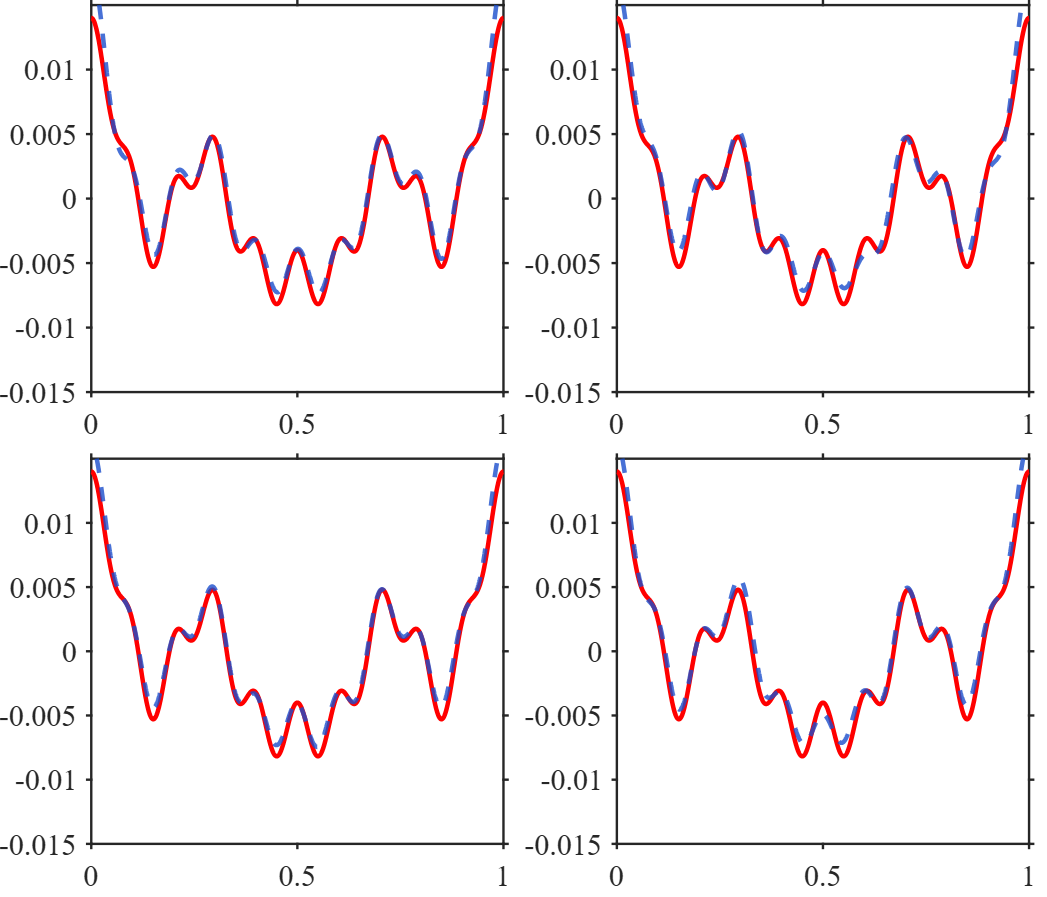}
	\caption{Reconstructions for Case 3 with \(H=0.2\) and increased noise. Rows(from top to bottom): \(L=3,5\); Columns(from left to right): \(10\%, 20\%\) noise.}
	\label{fig:noise-H02}
\end{figure}

We then test the reconstruction at a larger measurement distance by doubling the total thickness to \(H=0.4\). It is well known that evanescent waves decay exponentially with distance, making far-field reconstruction highly ill-conditioned. Under these adverse conditions, as shown in \cref{fig:noise-H04}, the \(L=3\) configuration deteriorates drastically even at \(10\%\) noise. Remarkably, the \(L=5\) configuration still successfully recovers the main surface features even when the noise level reaches \(20\%\). This contrast strongly highlights the critical role of the layer number in stabilizing reconstructions for distant targets.

\begin{figure}[htbp]
	\centering
	\includegraphics[width=0.6\textwidth]{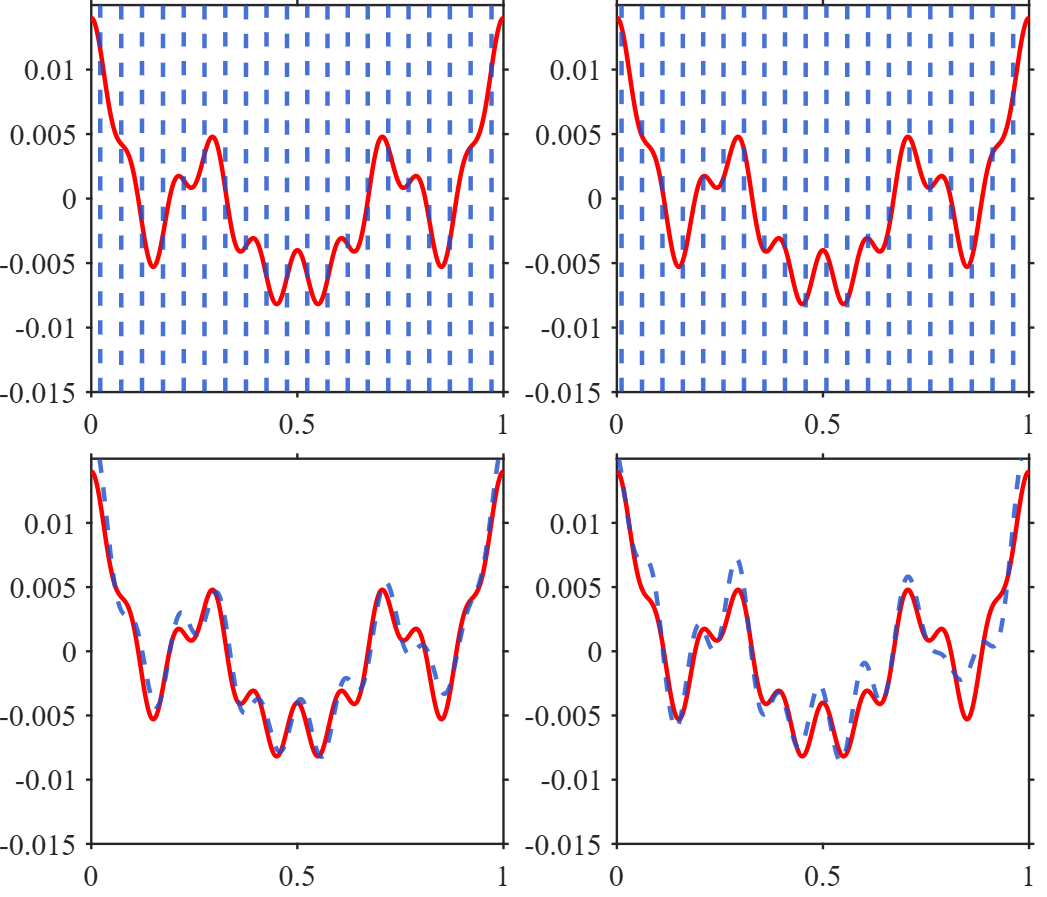}
	\caption{Reconstructions for Case 3 with \(H=0.4\). Rows(from top to bottom): \(L=3,5\); Columns(from left to right): \(10\%, 20\%\) noise.}
	\label{fig:noise-H04}
\end{figure}

\begin{figure}[ht]
	\centering
	\includegraphics[width=0.55\textwidth]{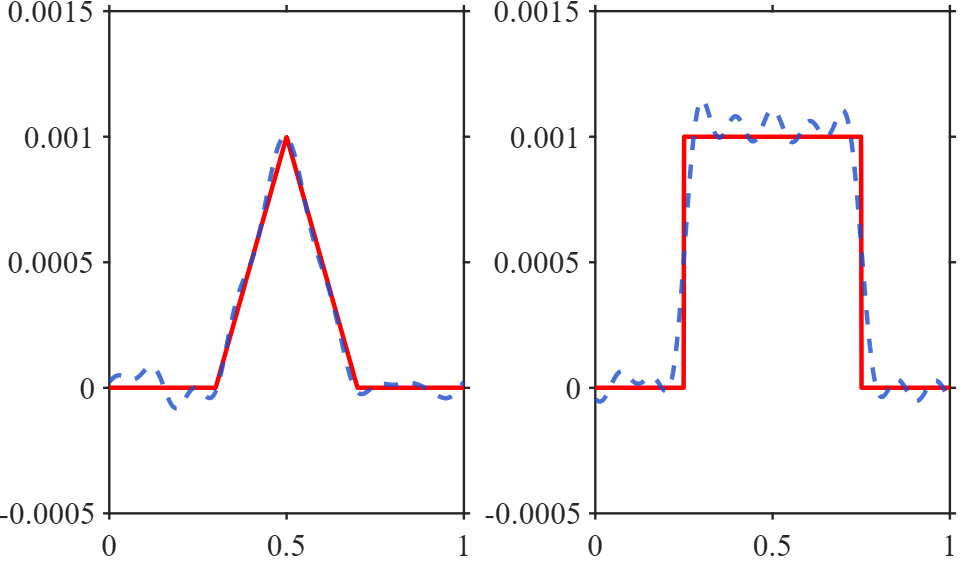}
	\caption{Reconstruction of nonsmooth profiles using \(L=5\) with \(5\%\) noise: continuous triangular (left) and discontinuous rectangular (right) profiles.}
	\label{fig:nonsmooth}
\end{figure}
Finally, we test the reconstruction of nonsmooth periodic profiles using \(L=5\) with \(5\%\) noise. We set the perturbation amplitude to \(\delta=0.001\) and consider two representative nonsmooth profiles:
\begin{align}
	\text{triangle:} \quad & g(x) = \max\bigl(0, 1 - 5|x - 0.5|\bigr), \label{eq:profile_tri}\\
	\text{square:} \quad & g(x) = \chi_{[0.25,\, 0.75]}(x), \label{eq:profile_sq}
\end{align}
where \(\chi\) denotes the indicator function.

\Cref{fig:nonsmooth} demonstrates that the principal geometric features of both triangular and rectangular profiles are successfully recovered. The slight oscillations near corners and discontinuities are inherent to Fourier truncation. These results confirm that the multilayer reconstruction remains highly effective beyond smooth sinusoidal surfaces.

\section{Conclusion}
\label{sec:conclusion}

In this paper, we investigated the inverse scattering of periodic PEC surfaces using a multilayer superlens. Based on the transformed field expansion, we derived an explicit linearized reconstruction formula. For perfectly matched parameters, we proved that the reconstruction factor is uniformly bounded when \(D_s \geq D_a\), independently of the layer number and arrangement. In contrast, for imperfectly matched parameters, we established first-order perturbation and high-frequency asymptotic formulas. These formulas reveal a critical physical insight: while small material losses inevitably induce exponential error amplification at high spatial frequencies, increasing the number of layers can substantially suppress this instability. Extensive numerical experiments verified that such multilayer configurations not only significantly enhance noise robustness in the presence of severe material perturbations, but also successfully recover highly nonsmooth surface profiles. Consequently, for a fixed total thickness, optimizing the layer number offers a practical and effective route to achieve stable subwavelength reconstruction and improved imaging resolution.

A natural direction for future research is to extend this stability analysis beyond the linearized approximation to the full nonlinear inverse scattering problem. Establishing rigorous stability estimates for the nonlinear problem, and quantifying the influence of higher-order perturbation terms, would provide a deeper theoretical understanding of the ultimate resolution limits of multilayer superlenses.

\addcontentsline{toc}{section}{References}

\end{document}